\newcounter{author}
\renewcommand*\author[1]{%
	\stepcounter{author}%
	\ifnum\c@author=1
	\gdef\@author{#1}%
	\else
	\xdef\@author{\unexpanded\expandafter{\@author \  and \ #1}}%
	\fi
	\csgdef{author@\the\c@author}{#1}}
\newcommand*\email[1]{%
	\csgdef{email@\the\c@author}{#1}}
\newcommand*\address[1]{%
	\csgdef{address@\the\c@author}{#1}}
	\xdef\author@count{\the\c@author}%
\newcommand*\print@authors{%
	\ifnum\c@author>\author@count
	\else
	\print@author{\the\c@author}%
	\advance\c@author by 1
	\expandafter\print@authors
	\fi}
\newcommand*\print@author[1]{%
	\par\medskip
	\begin{tabular}{@{}l@{}}%
		\csuse{address@#1}\\
		\textit{E-mail address}:
		\href{mailto:\csuse{email@#1}}{\csuse{email@#1}}
\end{tabular}}
\theoremstyle{plain}
\newtheorem{theorem}{Theorem}[section]
\newtheorem{lemma}[theorem]{Lemma}
\newtheorem{proposition}[theorem]{Proposition}
\newtheorem{corollary}[theorem]{Corollary}
\newtheoremstyle{named}{}{}{\itshape}{}{\bfseries}{.}{.5em}{\thmnote{#3's }#1}
\theoremstyle{named}
\newtheorem{namedtheorem}{Theorem}
\theoremstyle{definition}
\newtheorem{remark}[theorem]{Remark}
\newtheorem{example}[theorem]{Example}
\newtheorem{definition}[theorem]{Definition}
\numberwithin{equation}{section}
\newcommand{\bt}{\begin{theorem}}
	\newcommand{\et}{\end{theorem}}
\newcommand{\bco}{\begin{corollary}}
	\newcommand{\eco}{\end{corollary}}
\newcommand{\bd}{\begin{definition}}
	\newcommand{\ed}{\end{definition}}
\newcommand{\bp}{\begin{problem}}
	\newcommand{\ep}{\end{problem}}
\newcommand{\bl}{\begin{lemma}}
	\newcommand{\el}{\end{lemma}}
\newcommand{\bprop}{\begin{proposition}}
	\newcommand{\eprop}{\end{proposition}}
\newcommand{\br}{\begin{remark}}
	\newcommand{\er}{\end{remark}}
\newcommand{\bpf}{\begin{proof}}
	\newcommand{\epf}{\end{proof}}
\newcommand{\bex}{\begin{example}}
	\newcommand{\eex}{\end{example}}
\newcommand{\del}{\mathcal{D}\mathcal{E}\mathcal{L}}
\newcommand{\di}{\mathcal{D}}
\newcommand{\C}{\mathbb{C}}
\newcommand{\oF}{\overline{F}}
\renewcommand{\t}{\theta}
\renewcommand{\a}{\alpha}
\renewcommand{\b}{\beta}
\title{Integration in Finite Terms: Dilogarithmic Integrals}
\author{Yashpreet Kaur 
}
\address{Mathematics Department, Indian Institute of Science Education and Research Pune}
\email{yashpreetkm@gmail.com}
\author{Varadharaj R. Srinivasan}
\address{Department of Mathematics, Indian Institute of Science Education and Research Mohali}
\email{ravisri@iisermohali.ac.in}
\begin{document}
\date{}
\maketitle
\begin{abstract}
We extend the theorem of Liouville on integration in finite terms to include dilogarithmic integrals. The results provide a necessary and sufficient condition for an element of the base field to have an antiderivative in
a field extension generated by transcendental elementary functions and  dilogarithmic integrals. We also study algebraic independence of certain dilogarithmic integrals.
\end{abstract}
\section{Introduction}\label{intro}
 
In this paper, a field always means a field of characteristic zero and differential fields are equipped with a single derivation map denoted by $'$.   If $F$ is a differential field then the kernel $C_F$ of the derivation map is a subfield of $F$, called the field of constants of $F$. 
Let $\t, \eta\in F$ with $\eta\neq 0$ and $\t'=\eta'/\eta$.  Then $\theta$ is called a \emph{logarithm} of $\eta$ and $\eta$ is called an \emph{exponential} of $\theta.$ Note that for any $c\in C_F,$ we have $(\theta+c)'=\eta'/\eta$ and for any nonzero $c\in C_F,$ we have $\theta'=(c\eta)'/(c\eta).$ For convenience and clarity, if an element $\theta$ is a logarithm (respectively, an exponential) of $\eta$ then we shall use the symbol $\log(\eta)$ (respectively, $e^\eta$) to denote $\theta$.  A differential field extension $E=F(\t_1,\dots, \t_n)$ is called an \emph{elementary extension} of $F$ if  either $\t_i$ is a logarithm of an element of $F_{i-1}:=F(\t_1,\dots, \t_{i-1})$ or $\t_i$ is an exponential of an element of $F_{i-1}$ or $\t_i$ is algebraic over $F_{i-1},$ where $F_0:=F.$ An element $f\in F$ is said to admit an \emph{elementary integral} if there is an elementary extension $E$ of $F$ with $C_E=C_F$ such that $u'=f$ for some $u\in E;$ in which case, $u$ is called an \emph{elementary integral} of $f.$  

In \cite{Ros1968},  Rosenlicht provided a purely algebraic  necessary and sufficient criterion for a function to admit an elementary integral. This criterion, often referred in the literature as Liouville's Theorem on integration in finite terms,  states that if  $f\in F$ has an elementary integral then there are constants $c_1,\dots, c_n\in C_F$ and elements $w, r_1,\dots, r_n\in F$ such that \begin{equation}\label{liouvilleeqn} f=\sum^n_{i=1}c_i\frac{r'_i}{r_i}+w'.
\end{equation} Thus, if $u$ is an elementary integral of $f$ then there is an element $w\in F$ such that $u-w$ is a $C_F-$linear combination of logarithms of elements of $F.$ Ever since the publication of Liouville's Theorem,  several mathematicians  extended the theorem to include special functions such as logarithmic integrals, dilogarithmic integrals, Fresnel integrals and error functions.  A detailed account of the history of this problem can be found in \cite{Badd1}, \cite{Ritt} and \cite{sin-saund-cav}.  In this paper we consider the problem of extending  Liouville's Theorem by allowing dilogarithmic integrals in addition to transcendental elementary functions. 

{\bf }Let $g\in F\setminus\{0, 1\}$. An element $\ell_{2}(g)\in F$ is called a \emph{dilogarithm} (See \cite{Badd1}, p.912) of $g$ or a \emph{dilogarithmic integral} of $g$ if there is an  element $\theta \in F$ such that $\theta'=-(1-g)'/(1-g)$ and  $\ell'_{2}(g)=(g'/g)\theta$.  A differential field extension $E=F(\theta_1,\dots,\theta_n)$ of $F$ is a {\it dilogarithmic-elementary extension} of $F$  if  $\theta_i$ is algebraic over $F_{i-1}$ or $\t_i$ is exponential of an element of $F_{i-1}$ or $\t_i$ is a logarithm of an element of $F_{i-1}$ or there is an element $g\in F_{i-1}\setminus\{0,1\}$ such that $\theta_i$ is a dilogarithm of $g$. If none of the $\theta_i$ are algebraic over $F_{i-1}$ then we say $E$ is a {\it transcendental dilogarithmic-elementary extension} of $F.$ The following  theorem, which deals with the problem of integration in finite terms with dilogarithmic integrals, is the main result of this paper and its proof is contained in Theorem \ref{maintheorem}.

\bt \label{NSdilog}
Let  $E=F(\theta_1,\dots,\theta_n)$ be a transcendental dilogarithmic-elementary extension of $F$, $C_E=C_F$ and $u\in E$. Then $u'\in F$ if and only if  there is a finite indexing set $I$ and elements $r_i, w\in F$ and $g_i\in F\setminus\{0,1\}$ such that
\begin{align}
u'&=\sum_{i\in I}r_i\frac{g'_i}{g_i}+ w',\label{NSdilogeqn}\\
\notag r'_i&=c_i\frac{(1-g_i)'}{1-g_i}+\sum_{j\in I}c_{ij}\frac{g'_j}{g_j},
\end{align}
where $c_i$ and $c_{ij}$ are constants such that $c_{ij}=c_{ji}$. 

\et

The problem of integration in finite terms with dilogarithmic integrals was first considered by Baddoura (See \cite{Badd1}, p.933),  where he proved the following theorem: If $E$ is a transcendental dilogarithmic-elementary extension of $F,$  $C_E=C_F$, $C_F$ is an algebraically closed field, $F$ is a liouvillian extension of $C_F$ and $u\in E$ is an element such that $f:=u'\in F$ then \begin{equation}\label{expforu}u=\sum_{i\in I}c_iD(g_i)+ \sum_{j\in J}s_j\log(v_j)+w, \end{equation}where $g_i, s_j, v_j,w$ are elements in $F$, $c_i\in C_F$ and $D(g_i)=\ell_2(g_i)+(1/2)\log(g_i)\log(1-g_i).$
 In \cite{ykvrs-2019}, p.228 , Theorem 6.1, we devised new techniques that allowed us to generalize\footnote{We no longer require that $C_F$ is an algebraically closed field or that $F$ is a liouvillian extension of $C_F$.} and provide a simpler proof of this theorem of Baddoura. In a recent article (See \cite{Heb}), Hebisch has  generalized and reproved results of Baddoura on dilogarithmic integrals using completely different techniques that involve certain tensor product constructions. The generalization is that he also allows algebraic elements in the field of definition and thus the hypothesis that $\theta_i$ must be transcendental over $F_{i-1}$ in Baddoura's theorem shall be dropped. Note that Equation \ref{expforu} only provides an expression for $u$ in terms of dilogarithms and logarithms of elements of $F.$ Our result, in the spirit of Liouville's Theorem,   provides a necessary and sufficient condition (See Equations \ref{NSdilogeqn}) for the element $u'\in F$ to admit an antiderivative in a transcendental dilogarithmic-elementary extension. 
 
In Section \ref{sectionidndilog}, we provide  a new dilogarithmic identity (See Proposition \ref{dilogarithmicidentity} \ref{newdilogidentity}) and show that for distinct elements $\a_1,\dots,\a_t \in F$ and $~t\geq3,$ the set $\{\ell_2\left(\frac{\t-\a_j}{\t-\a_k}\right);k>j\}$ is algebraically independent over the logarithmic extension  $E=F(\t)(\{\log(\a_j-\a_k),\log(\t-\a_j);j,k=1,\dots,t\}).$  
The necessary differential algebra preliminaries required to read this article can be found in Section 2 of  \cite{ykvrs-2019}.  

\section{Integration in $\del$ extensions}\label{delextns}

An extension $E$ is called a  {\it $\del-$extension} (\cite{ykvrs-2019}, pp. 210-211) of $F$ if $E=F(\theta_1,\dots,\theta_n)$, $F_0:=F$ and $F_i=F_{i-1}(\t_i)$ with $C_E=C_F$, where
$\theta_i$ is algebraic over $F_{i-1}$ or $\theta_i$ is an exponential of an element of $F_{i-1}$ or $\theta_i$ is a logarithm of an element of $F_{i-1}$ or $\theta_i$ is a dilogarithmic integral of an element of $F_{i-1}$ or $\theta_i$ is an error function of an element of $F_{i-1}$ (that is $\theta'_i=u'v$, where $v'=(-u^2)'v$ for some $u,v\in F_{i-1}$) or $\theta_i$ is a logarithmic integral of an element of $F_{i-1}$ (that is, $\theta'_i=u'/v$, where $v'=u'/u$ for some $u,v\in F_{i-1}$).

The main result of this section is Theorem \ref{maintheorem}, where we shall classify all elements of the field $F$ that admits an antiderivative in a transcendental $\del-$extension of $F$. As a corollary, we shall obtain a proof of Theorem \ref{NSdilog}. We first recall from \cite{ykvrs-2019} the following definitions and a theorem.

An element $v\in F$ is said to admit a {\it $\del-$expression}  over $F$ if there are finite indexing sets $I,J,K$ and elements    $r_i\in F,$ $g_i\in F\setminus\{0,1\}$ for all $i\in I$, elements $u_j,\log(u_j)\in F$ and constants $a_j$ for all $j\in J$, elements $v_k, e^{-v^2_k}\in F$ and constants $b_k$ for all $k\in K$, and an element $w\in F$   such that  \begin{equation}\label{deflde}v=\sum_{i\in I} r_i\frac{g'_i}{g_i}+\sum_{j\in J}a_j\frac{u'_j}{\log(u_j)}+\sum_{k\in K}b_kv'_ke^{-v^2_k}+w',\end{equation} where for  each $i\in I$, there is an integer $n_i$ such that  $r'_i= \sum^{n_i}_{l=1}c_{il} h'_{il}/h_{il}$  for some constants $c_{il}$ and elements $h_{il}\in F$.  A $\del-$expression is called  a \textit{special $\del-$expression}  if for each $i\in I$, $r'_i=c_i(1-g_i)'/(1-g_i)$ for some constant $c_i$ and a {\it $\mathcal{D}-$expression} if it is special and for each $j,k,$ $a_j=b_k=0.$


A differential field extension $E$ of $F$ is called a {\it logarithmic extension} of $F$ if $C_E=C_F$ and there are elements $h_1,\dots,h_m\in F$ such that $E=F(\log h_1,\dots,\log h_m)$.




\bt\label{submaindilog}{\bf (\cite{ykvrs-2019}, p.227)} Let  $E=F(\theta_1,\dots,\theta_n)$ be a transcendental $\del-$extension of $F.$ Suppose that there is an element $u\in E$ with $u'\in F$ then $u'$ admits a special $\del-$expression over some logarithmic extension of $F$ and a $\del-$expression over $F$. Furthermore, if $E$ is a transcendental dilogarithmic-elementary extension of $F$ then $u'$ admits a $\di-$expression over some logarithmic extension of $F$. 
\et

Observe that Theorem \ref{submaindilog} provides a necessary condition for an element to admit an antiderivative in a transcendental $\del-$extension.  In the next two Propositions,  we modify Theorem \ref{submaindilog} so as to obtain a criteria that is both necessary as well as sufficient.  One of the important ingredients in the next proposition is a version of the Kolchin-Ostrowski Theorem,  whose statement we shall reproduce here for the benefit of the reader.
\begin{namedtheorem}[Kolchin-Ostrowski] (\cite{Kol1968}, p.$1155$  or \cite{Rue-Sin}, Appendix.)\label{kol-ost}
Let $E=F(\theta_1,\theta_2,\dots,\theta_n)$ be a  differential field extension of $F$ with $\theta'_i\in F$ for each $i$  and  $C_E=C_F$. If $y\in E$ and  $y'\in F$ then $y=\sum^n_{i=1}c_i\theta_i+\eta$, for constants $c_1,\cdots, c_n$ and $\eta\in F$. \end{namedtheorem}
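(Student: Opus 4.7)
The plan is to prove this by induction on $n$, in two main phases: first reducing to the case where $\theta_1, \ldots, \theta_n$ are algebraically independent over $F$, then using that independence together with partial-fraction analysis to pin down the form of $y$.

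Phase 1 (reduction to algebraic independence). I would begin by observing that we may assume no nontrivial $C_F$-linear combination of $\theta_1, \ldots, \theta_n$ already lies in $F$: if such a combination did exist, one of the $\theta_i$ could be expressed in terms of the others and an element of $F$, reducing the number of generators, and induction on $n$ would give the desired conclusion (with the coefficient from the linear relation absorbed appropriately). Under this reduction, I claim that $\theta_1, \ldots, \theta_n$ are algebraically independent over $F$. Suppose for contradiction there is a polynomial relation $P(\theta_1, \ldots, \theta_n) = 0$ with $P \in F[T_1, \ldots, T_n]$ of minimal total degree. Differentiating gives
\[
\sum_{i=1}^n \frac{\partial P}{\partial T_i}(\theta_1, \ldots, \theta_n)\, \theta_i' \;+\; P^{(\partial)}(\theta_1, \ldots, \theta_n) = 0,
\]
where $P^{(\partial)}$ denotes $P$ with its coefficients differentiated. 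Each $\partial P/\partial T_i$ has strictly smaller total degree than $P$, so by minimality the resulting polynomial must be $F$-proportional to $P$ (after clearing leading terms). Tracking the proportionality on the leading monomial of $P$ and using $C_E = C_F$, one extracts a nontrivial relation of the form $\sum c_i \theta_i \in F$ with $c_i \in C_F$, contradicting the standing assumption.

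Phase 2 (partial-fraction analysis). Once the $\theta_i$ are algebraically independent, set $K = F(\theta_1, \ldots, \theta_{n-1})$ and regard $y$ as a rational function in $\theta_n$ over $K$, with partial-fraction expansion
\[
y \;=\; p_0 + p_1 \theta_n + \cdots + p_m \theta_n^m \;+\; \sum_{\pi, j} \frac{q_{\pi,j}(\theta_n)}{\pi(\theta_n)^j},
\]
where $p_i \in K$, each $\pi$ is monic irreducible in $K[T]$, and $\deg q_{\pi,j} < \deg \pi$. Since $\theta_n' \in F \subseteq K$, differentiating preserves the pole divisor in $\theta_n$; the hypothesis $y' \in F \subseteq K$ has no poles in $\theta_n$, so every singular term must vanish. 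Differentiating the polynomial part and comparing degrees in $\theta_n$ forces $m \le 1$, $p_1' = 0$, so $p_1 = c_n \in C_F$ (using $C_E = C_F$), and $p_0' \in F$. Thus $y = c_n \theta_n + \eta$ with $\eta = p_0 \in K$ satisfying $\eta' = y' - c_n \theta_n' \in F$, and the inductive hypothesis applied to $\eta \in F(\theta_1, \ldots, \theta_{n-1})$ completes the proof.

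The hard step is Phase 1. The subtlety is that the polynomial obtained by differentiating a minimal relation need not be identically zero as an element of $F[T_1, \ldots, T_n]$ — it merely vanishes at $(\theta_1, \ldots, \theta_n)$ — so one cannot naively compare coefficients. Extracting a $C_F$-linear dependence requires carefully following the evolution of a chosen leading coefficient of the minimal polynomial under the derivation, and it is precisely at this point that the constancy hypothesis $C_E = C_F$ is essential: without it the ratio of coefficients could be a nonconstant element of $F$ whose logarithmic derivative carries no information.
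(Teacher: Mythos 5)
A preliminary remark: the paper offers no proof of this statement at all --- it is reproduced as a known theorem, with citations to Kolchin and to the appendix of Rubel--Singer --- so your argument must be judged on its own. Your Phase 2 is the standard core of such a proof and is essentially sound, modulo two glossed points: differentiation does not ``preserve'' the pole divisor but raises the order of a pole at a monic irreducible $\pi\in K[T]$ from $j$ to $j+1$ (because $\pi(\theta_n)'$ has $T$-degree less than $\deg\pi$ and is nonzero, else $\pi(\theta_n)$ would be a new constant), and the bound $m\le 1$ is not a pure degree count: for $m\ge 2$ one gets $p_m'=0$ and then $(mp_m\theta_n+p_{m-1})'=0$, and only $C_E=C_F$ turns this into the contradiction $\theta_n\in K$. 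The genuine gap is Phase 1. The claim made there --- algebraic dependence of the $\theta_i$ over $F$ forces a nontrivial constant-coefficient combination $\sum c_i\theta_i$ to lie in $F$ --- is precisely the Kolchin--Ostrowski dependence theorem, so it carries the full weight of what is being proved, and your sketch does not establish it. A relation $P$ of minimal total degree need not be unique up to $F$-scaling: the vanishing ideal of $(\theta_1,\dots,\theta_n)$ in $F[T_1,\dots,T_n]$ is prime but is principal only when its height is one, i.e.\ when the transcendence degree drops by exactly one; in general the differentiated relation $Q=P^{(\partial)}+\sum_i\theta_i'\,\partial P/\partial T_i$, which vanishes at the $\theta_i$ and has total degree at most $\deg P$, need not be $F$-proportional to $P$. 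Even granting proportionality, the passage to a nontrivial $\sum c_i\theta_i\in F$ is asserted (``tracking the proportionality on the leading monomial'') rather than carried out, and you yourself flag it as the hard step without closing it.

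Two repairs are possible. The direct one: first replace $\{\theta_1,\dots,\theta_n\}$ by a minimal algebraically dependent subset, so the vanishing ideal has height one and is principal in the UFD $F[T_1,\dots,T_n]$; then minimal-degree relations are $F$-multiples of the irreducible generator, and after normalizing one top-degree coefficient of $P$ to $1$ the relation $Q(\theta)=0$ with $\deg Q\le d:=\deg P$ forces $Q=0$. Comparing coefficients of $Q$ (writing $P=\sum_\gamma a_\gamma T^\gamma$ and $e_l$ for the $l$th unit multi-index) gives $a_\beta'=0$ for $|\beta|=d$ and, for $|\beta|=d-1$, that $a_\beta+\sum_l(\beta_l+1)a_{\beta+e_l}\theta_l$ is a constant, which yields the desired nontrivial relation upon choosing $\beta=\alpha-e_i$ with $a_\alpha\neq0$, $\alpha_i>0$. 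The simpler repair: drop Phase 1 altogether. In the induction with $K=F(\theta_1,\dots,\theta_{n-1})$, if $\theta_n$ is algebraic over $K$, differentiate its monic minimal polynomial $T^m+a_{m-1}T^{m-1}+\dots+a_0$; minimality annihilates the resulting degree-$(m-1)$ polynomial, the top coefficient giving $m\theta_n'+a_{m-1}'=0$, so $\theta_n+a_{m-1}/m\in C_E=C_F$ and $\theta_n\in K$, and you may simply omit $\theta_n$; if $\theta_n$ is transcendental over $K$, your Phase 2 applies verbatim. This proves the theorem without ever invoking the dependence statement, which then falls out as a corollary rather than serving as an unproved prerequisite.
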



\begin{theorem} \label{specialtodel}
	Let $F$ be a differential field and $v\in F$. If $v$ admits a special $\del-$expression over a logarithmic extension $E$ of $F$ then $v$ admits a $\del-$expression over $F$ of the following form.
	\begin{align}\label{dilogv}
	v=\sum_{i\in I}r_i\frac{g'_i}{g_i}+ \sum_{j\in J}a_j\frac{u'_j}{\log u_j}+\sum_{k\in K}b_kv'_ke^{-v^2_k}+w',
	\end{align}
	\begin{equation}\label{dilogcoef}
	r'_i=c_i\frac{(1-g_i)'}{1-g_i}+\sum_{l\in I}c_{il}\frac{g'_l}{g_l},
	\end{equation}
	where $c_i$ and $c_{il}$ are constants such that $c_{il}=c_{li}$.
\end{theorem}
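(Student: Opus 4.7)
My plan is a two-phase descent-then-symmetrize argument. In the first phase, the special $\del$-expression for $v$ over $E=F(\log h_1,\dots,\log h_m)$ is pushed down to a $\del$-expression over $F$ using the Kolchin--Ostrowski theorem. In the second phase, the cross-coefficients $c_{il}$ in the derivative formula for the $r_i$'s are symmetrized.

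For the descent, I would first arrange that all generators $g_i$, $u_j$, $v_k$, $\log u_j$, $e^{-v_k^2}$ and $w$ of the given expression lie in $F$. For the exponential and logarithmic-integral pieces this is essentially automatic: since $E$ is purely logarithmic over $F$ and $C_E=C_F$, such elements cannot genuinely involve the generators $\log h_l$. For $g_i\in E$, which is a rational function in the $\log h_l$ over $F$, I factor its numerator and denominator and use the multiplicativity $(AB)'/(AB)=A'/A+B'/B$ to replace each $g_i$ by simpler factors; those factors involving the $\log h_l$ will contribute additional logarithmic derivatives that are absorbed in the next step. Once $g_i\in F$, the hypothesis $r_i'=c_i(1-g_i)'/(1-g_i)\in F$ together with Kolchin--Ostrowski gives $r_i=R_i+\sum_l k_{il}\log h_l$ with $R_i\in F$ and constants $k_{il}\in C_F$.

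Next I substitute these decompositions back into the expression for $v$. Since $v\in F$, the $\log h_l$-dependent part of the right-hand side must collapse to a derivative of an element of $F$. Using the integration-by-parts identity $\log h_l\cdot g_i'/g_i=(\log h_l\log g_i)'-(h_l'/h_l)\log g_i$ and rearranging, I expect to arrive at a $\del$-expression for $v$ over $F$ in which each $R_i$ satisfies
\[
R_i'=c_i\frac{(1-g_i)'}{1-g_i}+\sum_{l\in I}c_{il}\frac{g_l'}{g_l}
\]
for constants $c_{il}$, and all residual logarithmic pieces are absorbed into the derivative term of a new $w\in F$.

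The delicate final step is to enforce $c_{il}=c_{li}$. Writing $c_{il}=s_{il}+a_{il}$ with $s$ symmetric and $a$ antisymmetric, the product-rule identity
\[
(\log g_i\,\log g_l)'=\frac{g_i'}{g_i}\log g_l+\frac{g_l'}{g_l}\log g_i
\]
shows that the symmetric part corresponds to a perfect derivative, while the antisymmetric combination pairs trivially against the symmetric quantity $\log g_i\log g_l$ and therefore, in a formal sense, should integrate to zero. I plan to use this observation to absorb the antisymmetric contribution into the derivative term while simultaneously modifying each $R_i$. The main obstacle I anticipate is verifying that these absorption steps keep the modified $R_i$ genuinely inside $F$ and do not disturb the special form of their derivatives; handling this will most likely require either an induction on the number $m$ of logarithmic generators of $E$, or a careful residue analysis at each $\log h_l=\alpha$ in the partial-fraction expansion of the original $r_i$.
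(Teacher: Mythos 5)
The decisive gap is your final symmetrization step. Writing $c_{il}=s_{il}+a_{il}$ and asserting that the antisymmetric part ``pairs trivially'' and ``in a formal sense should integrate to zero'' is not a proof, and the assertion is false as stated: an antisymmetric combination $\sum_{i<l}a_{il}\bigl(\log g_l\,\tfrac{g_i'}{g_i}-\log g_i\,\tfrac{g_l'}{g_l}\bigr)$ is in general \emph{not} the derivative of an element of $F$, so it cannot be absorbed into $w'$. For example, over $F=\C(x,\log x,\log(1-x))$ with $g_1=x$, $g_2=1-x$, the combination $\log g_1\,\tfrac{g_2'}{g_2}-\log g_2\,\tfrac{g_1'}{g_1}$ equals $\ell_2(x)'-\ell_2(1-x)'$, which by the reflection identity differs from $2\ell_2(x)'$ by the exact derivative $(\log x\log(1-x))'$; absorbing it into $w'$ with $w\in F$ would put $\ell_2(x)$ in $F$ up to a constant, which is impossible. (A symmetric rewriting of this particular expression does exist, but it uses the relation $g_2=1-g_1$, i.e.\ exactly the structural information your formal cancellation argument discards.) In the paper the symmetry is \emph{derived}, not imposed: one chooses a transcendence base $\log(1-g_1),\dots,\log(1-g_n),\log h_1,\dots,\log h_m$ of $E$ over $F$ adapted to the special relations $r_i'=c_i(1-g_i)'/(1-g_i)$, applies Kolchin--Ostrowski to express the $r_i$ in the $\log(1-g_l)$, expands $w$ as a quadratic polynomial in these logarithms (Proposition 2.2(c) of \cite{ykvrs-2019}), and compares coefficients. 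The same constant $c_{il}$ then occurs both as the coefficient of $\log(1-g_l)\tfrac{g_i'}{g_i}$ and of $\log g_i\tfrac{(1-g_l)'}{1-g_l}$, while the quadratic part of $w$ contributes only the symmetric combinations $a_{lp}+a_{pl}$; after relabelling so that the final list of $g$'s consists of the $1-g_i$, the $g_i$ and the $h_q$, the symmetry $c_{il}=c_{li}$ is automatic. Your plan never produces this mechanism, and the obstacle you yourself flag at the end is precisely where the theorem lives.

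Your first phase is also not sound as described, although its Kolchin--Ostrowski skeleton is in the right spirit. Factoring a $g_i\in E\setminus F$ and splitting $g_i'/g_i$ over its factors destroys the special relation $r_i'=c_i(1-g_i)'/(1-g_i)$ (the multiplicative factors of $g_i$ carry no information about $1-g_i$), and the resulting terms $r_i\,P'/P$, with $P$ a polynomial in the $\log h_l$ and $r_i$ nonconstant, cannot simply be declared ``absorbed in the next step''---taming them is the whole content of the reduction, which the paper does not redo but imports from the proof of Lemma 4.3 of \cite{ykvrs-2019} (run with $F$ in place of $F(\theta)$) to arrange $g_i,u_j,\log u_j,v_k,e^{-v_k^2}\in F$ with only the $r_i$ and $w$ in a logarithmic extension. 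Relatedly, your integration-by-parts identity $\log h_l\,\tfrac{g_i'}{g_i}=(\log h_l\log g_i)'-\tfrac{h_l'}{h_l}\log g_i$ steps outside the framework, since $\log g_i$ need not lie in $F$ or even in $E$, so the residual term is not of the allowed form $r\,h_l'/h_l$ with $r\in F$; and applying Kolchin--Ostrowski with an arbitrary generating set $\log h_l$, rather than a base built first from the $\log(1-g_i)$, loses the bookkeeping that later makes the cross terms range over the same index set $I$ with symmetric constants.
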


\begin{proof}
	Assume that $v$ admits special $\del-$expression over some logarithmic extension of $F$ and proceed as in the proof of Lemma 4.3 of \cite{ykvrs-2019}, with $F$ in place of $F(\theta)$, to obtain  the following special $\del-$expression for $v.$
	\begin{align}\label{eqndelv}
	v=\sum_{i\in I}r_i\frac{g'_i}{g_i}+\sum_{j\in J}a_j\frac{u'_j}{\log u_j}+\sum_{k\in K}b_kv'_ke^{-v_k^2}+w',
	\end{align}
	where  each $g_i, u_j, \log u_j, v_k, e^{-v^2_k}$ belongs to $F,$ each $r_i$ and $w$ belong to some logarithmic extension $E$ of $F$ and for some constant $c_i$, \begin{equation}\label{newr_i}
		r'_i=c_i\frac{(1-g_i)'}{1-g_i}.\end{equation}
	
Let $S:=v-\sum_{j\in J}a_j\frac{u'_j}{\log u_j}+\sum_{k\in K}b_kv'_ke^{-v_k^2}\in F$ and observe that
		\begin{align}\label{eqndel}
	S=\sum_{i\in I}r_i\frac{g'_i}{g_i}+w'.
	\end{align}
	 For convenience, let $I=\{1,2,\dots,t\}$ and assume that $\{\log(1-g_1),\dots,\log(1-g_n)\}$ forms a transcendence base for the differential field $F(\{r_i|i\in I\}).$ Then it is well-known that $F(\{r_i|i\in I\})=F(\log(1-g_1),\dots,\log(1-g_n)).$ Since $r'_i=c_i(1-g_i)'/(1-g_i),$ by the Kolchin-Ostrowski Theorem, we have 
	\begin{align}\label{depr_i}
	r_i&=c_i\log(1-g_i)+e_i \qquad\quad~~~ \text{for}\quad i=1,\dots,n\notag\\
	\text{and}\quad r_i&=\sum_{l=1}^nc_{il}\log(1-g_l)+s_i \qquad \text{for}\quad i=n+1,\dots,t,	
	\end{align}
	where $e_i,c_{il}$ are constants and $s_i\in F.$ 
	
	Find $h_1,\dots,h_m$ in $F$ so that $\{\log h_1,\dots,\log h_m\}$ forms a transcendence base for $E$ over $F(\log(1-g_1),\dots,\log(1-g_n)).$ Then $E=F(\log(1-g_1),\dots,\log(1-g_n),\log h_1,\dots,\log h_m).$  Now $w\in E$ and $w'$ is a linear polynomial in $\log(1-g_i),$ therefore, using Proposition 2.2(c) of \cite{ykvrs-2019}, we shall write
	\begin{align}\label{w}
	w=&\sum_{l,p=1}^na_{lp}\log(1-g_l)\log(1-g_p)+\sum_{l=1}^ny_l\log(1-g_l)+\tilde{w},
	\end{align}
	where each $a_{lp}$ is a constant and elements $y_l,\tilde{w}$ belongs to $F(\log h_1,\dots,\log h_m).$ Substituting  Equations \ref{depr_i} and \ref{w} in Equation \ref{eqndel} and equating the coefficients of $\log(1-g_l)$ to $0$, we obtain
	\begin{align}\label{y_l}
	c_l\frac{g'_l}{g_l}+\sum_{i=n+1}^tc_{il}\frac{g'_i}{g_i}+\sum_{p=1}^n(a_{pl}+a_{lp})\frac{(1-g_p)'}{(1-g_p)}+y'_l=0 \in F.
	\end{align}
	In particular, we have $y'_l\in F.$ Since $y_l\in F(\log h_1,\dots,\log h_m),$   from the Kolchin-Ostrowski Theorem, $y_l=\sum_{q=1}^me_{lq}\log h_q+z_l$ for constants $e_{lq}$ and elements $z_l\in F.$ 
	Rewriting Equation \ref{eqndel}, we get
	 	\begin{align}\label{tilde}
	 S=\sum_{i=1}^ne_i\frac{g'_i}{g_i}+\sum_{i=n+1}^ts_i\frac{g'_i}{g_i}+\sum_{l=1}^ny_l\frac{(1-g_l)'}{1-g_l}+\tilde{w}'.
	 \end{align}
		Substituting for $y_l$, we obtain 	
	\begin{align}\label{tildew}
	S=\sum_{i=1}^ne_i\frac{g'_i}{g_i}+\sum_{i=n+1}^ts_i\frac{g'_i}{g_i}+\sum_{l=1}^n\bigg(\sum_{q=1}^me_{lq}\log h_q+z_l\bigg)\frac{(1-g_l)'}{1-g_l}+\tilde{w}'.
	\end{align}
	Thus $\tilde{w}'$ is a linear polynomial in $\log h_q$, for each $q$, over $F.$  Using Proposition 2.2(c) of \cite{ykvrs-2019}, for constants $d_{jp}$ and $w_q,w_0\in F$,  we  write
	\begin{align*}
	\tilde{w}=\sum_{j,q=1}^md_{jq}\log h_j\log h_q+\sum_{q=1}^mw_q\log h_q+w_0.
	\end{align*}

	Substitute $\tilde{w}$ in Equation \ref{tildew} and compare the coefficients of $\log h_q$ to obtain
	\begin{align*}
	\sum_{l=1}^ne_{lq}\frac{(1-g_l)'}{1-g_l}+\sum_{j=1}^m(d_{jq}+d_{qj})\frac{h'_j}{h_j}+w'_q=0.
	\end{align*}
	That is, $
	\sum_{l=1}^ne_{lq}\log(1-g_l)+\sum_{k=1}^m(d_{jq}+d_{qj})\log h_q+w_q$ is a constant in $F.$ Since $\log(1-g_1),\dots,\log(1-g_n),\log h_1,\dots,\log h_m$ forms a transcendence base for $E$ over $F,$ the coefficients $e_{lq},d_{jq}+d_{qj}$ must be $0.$ Therefore, $y_l=z_l\in F,$ $w_q\in C_F$  and $\tilde{w}=\sum_{q=1}^mw_q\log h_q+w_0.$

	Now  Equation \ref{tildew} becomes
	\begin{equation}\label{desireddelexp}
	S=\sum_{i=1 }^ne_i\frac{g'_i}{g_i}+\sum_{i=n+1}^ts_i\frac{g'_i}{g_i}+\sum_{l=1}^ny_l\frac{(1-g_l)'}{1-g_l}+\sum_{q=1}^mw_q\frac{h'_q}{h_q}+w'_0.
	\end{equation}
	From Equations \ref{newr_i} and \ref{depr_i}, for $i=n+1,\dots,t,$ 
	\begin{equation*}
	s'_i=c_i\frac{(1-g_i)'}{1-g_i}-\sum_{l=1}^nc_{il}\frac{(1-g_l)'}{1-g_l} \end{equation*}
and 	from Equation  \ref{y_l}, for $l=1,\dots,n,$  \begin{equation*}
	y'_l=-c_l\frac{g'_l}{g_l}-\sum_{i=n+1}^tc_{il}\frac{g'_i}{g_i}-\sum_{p=1}^n(a_{lp}+a_{pl})\frac{(1-g_p)'}{1-g_p}.
	\end{equation*}
Thus, for some constants $\tilde{a_i}$ and $\tilde{b_l},$ 	\begin{align*}
	s_i&=c_i\log(1-g_i)-\sum_{l=1}^nc_{il}\log(1-g_l)+\tilde{a_i}\quad\text{for}\quad i=n+1,\dots,t\ \  \text{and}\\y_l&=-c_l\log g_l-\sum_{i=n+1}^tc_{il}\log g_i-\sum_{p=1}^n(a_{lp}+a_{pl})\log(1-g_p)+\tilde{b_l}\qquad\text{for}\quad l=1,\dots,n.
	\end{align*}
	
	Now Equation \ref{desireddelexp} becomes
	\begin{align}\label{desireddelexp1}
		S=&\sum_{i=1 }^ne_i\frac{g'_i}{g_i}+\sum_{i=n+1}^t\Big(c_i\log(1-g_i)-\sum_{l=1}^nc_{il}\log(1-g_l)+\tilde{a_i}\Big)\frac{g'_i}{g_i}\notag\\&+\sum_{l=1}^n\Big(-c_l\log g_l-\sum_{i=n+1}^tc_{il}\log g_i-\sum_{p=1}^n(a_{lp}+a_{pl})\log(1-g_p)+\tilde{b_l}\Big)\frac{(1-g_l)'}{1-g_l}+\sum_{q=1}^mw_q\frac{h'_q}{h_q}+w'_0.
\end{align}
	Note that the coefficient of $(g'_i/g_i)\log(1-g_l)$ is same as that of $((1-g_l)'/(1-g_l))\log g_i.$ Therefore, assuming 	\begin{multicols}{2}
		$\lambda_i:=\begin{cases}y_i, \ \mbox{if}\quad 1\leq i\leq n\\ s_i, \ \mbox{if}\quad n+1\leq i\leq t\\ e_{i-t}, \ \mbox{if}\quad t+1\leq i\leq t+n\\
		w_{i-t-n}\ \mbox{if}\quad t+n+1\leq i\leq t+m+n\end{cases}$
		
		\columnbreak
		
		$\mu_i:=\begin{cases}1-g_i, \ \mbox{if}\quad 1\leq i\leq n\\ g_i, \ \mbox{if}\quad n+1\leq i\leq t\\ g_{i-t}, \ \mbox{if}\quad t+1\leq i\leq t+n\\
		h_{i-t-n} \ \mbox{if}\quad t+n+1\leq i\leq t+m+n\end{cases}$
		
	\end{multicols}	
	and $L=\{1,2,\dots,t+n+m\},$ we rewrite Equation \ref{desireddelexp} as  
	\begin{align}\label{desireddelexp2}
S	&=\sum_{i\in L}\lambda_i\frac{\mu'_i}{\mu_i}+w'_0\notag \\
\text{i.e.,}\qquad v	&=\sum_{i\in L}\lambda_i\frac{\mu'_i}{\mu_i}+\sum_{j\in J}a_j\frac{u'_j}{\log u_j}+\sum_{k\in K}b_kv'_ke^{-v_k^2}+w'_0\notag\\
	\text{and}\qquad\lambda'_i&=\tilde{c}_i\frac{(1-\mu_i)'}{1-\mu_i}+\sum_{l\in L}\tilde{c}_{il}\frac{\mu_l'}{\mu_l},
	\end{align}
	for suitably chosen constants $\tilde{c}_i$ and $\tilde{c}_{il}$ with  $\tilde{c}_{il}=\tilde{c}_{li}.$
\end{proof}

\bprop\label{suffmainthrm}
Let $F$ be a differential field and $v\in F$. Suppose $v$ admits a $\del-$expression over $F$ of the form\begin{align}
v=\sum_{i\in I}r_i\frac{g'_i}{g_i}+ \sum_{j\in J}a_j\frac{u'_j}{\log(u_j)}+\sum_{k\in K}b_kv'_ke^{-v^2_k}+w',
\end{align}
\begin{equation}
r'_i=c_i\frac{(1-g_i)'}{1-g_i}+\sum_{l\in I}c_{il}\frac{g'_l}{g_l},
\end{equation}
where $c_i$ and $c_{il}$ are constants with $c_{il}=c_{li}$. Then there exists a $\del-$extension $E$ of $F$ containing an antiderivative of $v.$ 
\eprop

\bpf We claim that $E=F(\{\log (1-g_i),\log g_i,\ell_2(g_i),li(u_j),erf(v_k)\})$ contains an antiderivative of $v$.  Since $r_i=c_i\log(1-g_i)+\sum_{l\in L}c_{il}\log g_l+e_i$ for constants $e_i$ and $c_{li}=c_{il}$, it follows that   \begin{align*}
v=\sum_{i\in I}c_i\log(1-g_i)\frac{g'_i}{g_i}+\frac{1}{2}\sum_{i,l\in I}c_{il}\left(\log g_i\log g_l\right)'+\sum_{i\in I}e_i\frac{g'_i}{g_i}+\sum_{j\in J}a_j\frac{u'_j}{\log u_j}+\sum_{k\in K}b_kv'_ke^{-v^2_k}+w'.
\end{align*}
Observe that the element
\begin{align*}
u:=-\sum_{i\in I}c_i\ell_2(g_i)+\frac{1}{2}\sum_{i,l\in I}c_{il}\log g_i\log g_l+\sum_{i\in I}e_i\log g_i+\sum_{j\in J}a_j \ li(u_j)+\sum_{k\in K}b_k \ erf(v_k)+w
\end{align*}
is  an antiderivative of $v$ and that $u\in E$.\epf
	 \bt \label{maintheorem}
Let  $E=F(\theta_1,\dots,\theta_n)$ be a transcendental $\del-$extension of $F.$ Then there is an element $u\in E$ with $u'\in F$ if and only if $u'$ admits a $\del-$expression over $F$ of the form \begin{align*}
u'=\sum_{i\in I}r_i\frac{g'_i}{g_i}+ \sum_{j\in J}a_j\frac{u'_j}{\log u_j}+\sum_{k\in K}b_kv'_ke^{-v^2_k}+w',
\end{align*}
\begin{equation*}
r'_i=c_i\frac{(1-g_i)'}{1-g_i}+\sum_{l\in I}c_{il}\frac{g'_l}{g_l},
\end{equation*}
where $a_j,b_k,c_i,$ and $c_{il}$ are constants such that $c_{il}=c_{li}$.\et

\bpf From Theorem \ref{submaindilog}, we know $u'$ satisfies a special $\del-$expression over a logarithmic extension of $F.$ Now apply Propositions \ref{specialtodel} and \ref{suffmainthrm} to complete the proof.\epf

\bpf[\bf Proof of Theorem \ref{NSdilog}.] Apply Theorem \ref{submaindilog}, Theorem \ref{specialtodel} and Proposition \ref{suffmainthrm} with $J$ and $K$ being empty sets.\epf

\section{Applications}  \label{examples}
\bex
	Consider the differential field $F=\mathbb{C}(z,\log(1+z),\log(z(1-z)(1-z-z^2)))$ and the element 
	\begin{align}
	v=-\frac{(1-z-z^2)'}{1-z-z^2}\log(1+z)+\frac{z'}{z}\log(z(1-z)(1-z-z^2))+v'_0\in F. 
	\end{align}
	Through a lengthy calculation, it was proved in \cite{ykvrs-2019}, pp.231-232 that $v$ admits an antiderivative in a transcendental dilogarithmic-elementary extension of $F$. However, if we let  
	 $g_1:=1-z-z^2,~g_2:=z,~r_1=-\log(1+z)$ and $r_2=\log((z(1-z)(1-z-z^2)))$ then we see that
	\begin{align}
	r'_1&=-\left(\frac{z'}{z}+\frac{(1+z)'}{1+z}\right)+\frac{z'}{z}=-\frac{(1-g_1)'}{1-g_1}+\frac{g'_2}{g_2}\quad\text{and}\notag\\
	r'_2&=\frac{(1-z)'}{1-z}+\frac{(1-z-z^2)'}{1-z-z^2}+\frac{z'}{z}=\frac{(1-g_2)'}{1-g_2}+\frac{g'_1}{g_1}+\frac{g'_2}{g_2},
	\end{align}
	which is in accordance with the Theorem \ref{NSdilog}. Thus it has become immediate that $v$ admits an antiderivative in some transcendental dilogarithmic- elementary extension $E$ of $F.$ \eex
\bex\label{posexp} Let $F=\C(x,e^x)$ be the ordinary differential field with derivation $':=d/dx$ and consider the differential field $E=\C(x,e^x, \log(1-e^x), \ell_2(e^x)).$ We shall now find all elements of $F$ having an antiderivative in $E.$ 

Let $u\in E$ and that $u'\in F.$ Then by the Kolchin -Ostrowski Theorem, for some $w\in \C(x,e^x,\log(1-e^x))$ and constant $c,$ we have $u=c\ell_2(e^x)+w.$ That is,
	\begin{equation} \label{ue^z}
	u'=-c\frac{(e^x)'}{e^x}\log(1-e^x)+w'=-c\log(1-e^x)+w',	
	\end{equation} which is a $\di-$expression over $\C(x,e^x,\log(1-e^x)).$
	Then using Proposition 2.2 (c) of \cite{ykvrs-2019}, we can write $w=c_1\log^2(1-e^x)+w_1\log(1-e^x)+w_0,$ for some constant $c_1$ and elements $w_1,w_0\in F.$ Substituting $w'$ in Equation \ref{ue^z} and comparing the coefficients of $\log(1-e^x),$ we  obtain
	\[w'_1=c-2c_1\frac{(1-e^x)'}{1-e^x}.\] That is, $(w_1-cx)'=2c_1(1-e^x)'/(1-e^x)$ One can show that there is no element $z\in F$ such that $z'=(1-e^x)'/(1-e^x)$ and therefore $c_1=0$.  Hence from Equation \ref{ue^z}, we obtain  that \[u'=w_1\frac{(1-e^x)'}{1-e^x}+w'_0,\quad\text{where }\ w'_1=c=c\frac{(e^x)'}{e^x}.\] Conversely, it is easy to see that if \begin{align*}
	v:&=r\frac{(1-e^x)'}{1-e^x}+w'\in F\\
	&=-r'\log(1-e^x)+(w+r\log(1-e^x))'
	\end{align*} where $r,w\in \C(x,e^x)$ and $r'=c$ for some $c\in \C$ then $\int v=c\ell_2(e^x)+w+r\log(1-e^x)+d\in E$ for some constant $d\in \C.$
	\eex


 \begin{theorem}\label{logarithmicintegrals} If $H\in \C(Y)$ is a non-constant rational function such that $H(\log(x))$ has no antiderivatives in $\C(x,\log(x))$ then  it has no antiderivatives in any transcendental dilogarithmic-elementary extension of $\C(x,\log(x)).$  \end{theorem}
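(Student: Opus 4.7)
The plan is to prove the contrapositive: assuming $H(\log(x))$ admits an antiderivative $u$ in some transcendental dilogarithmic-elementary extension $E$ of $F := \C(x,\log(x))$, I will show $H(\log(x))$ already has an antiderivative in $F$. The opening move is to apply Theorem \ref{NSdilog}, which furnishes elements $r_i, g_i, w \in F$ and constants $c_i, c_{ij} = c_{ji}$ satisfying the two displayed identities for $H(\log(x))$ and $r_i'$.

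Next, arguing as in the proof of Proposition \ref{suffmainthrm}, I would work inside a logarithmic extension $\tilde F = F(\log(1-g_i),\log(g_j):i,j\in I)$ of $F$ and use the Kolchin--Ostrowski theorem to integrate the second identity, obtaining $r_i = c_i\log(1-g_i) + \sum_j c_{ij}\log(g_j) + d_i$ for constants $d_i$. Substituting into the first identity and regrouping via $c_{ij} = c_{ji}$ yields the antiderivative
$$u^* := -\sum_i c_i\,\ell_2(g_i) + \frac{1}{2}\sum_{i,j} c_{ij}\log(g_i)\log(g_j) + \sum_i d_i\log(g_i) + w$$
of $H(\log(x))$ in a transcendental dilogarithmic-elementary extension of $\tilde F$. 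Since $(u - u^*)' = 0$, one has $u - u^* \in C_F = \C$, so the task reduces to showing $u^* \in F + \C$.

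For the crux, I would exploit that $H(\log(x)) \in \C(\theta)$ where $\theta := \log(x)$, i.e., $H(\log(x))$ has no $x$-dependence outside $\theta$. Factoring each $g_i$ and $1-g_i$ over $\overline{\C(x)}[\theta]$ into linear pieces $(\theta - \beta)$ and expanding the logarithmic derivatives $g_i'/g_i$ and $(1-g_i)'/(1-g_i)$ as sums of simple $\theta$-fractions, one can carry out a partial-fraction analysis of the identity $\sum_i r_i g_i'/g_i = H(\theta) - w'$ with respect to $\theta$ over $\overline{\C(x)}$. Since the $\theta$-poles of $H(\theta)$ lie at finitely many constant places $\alpha_1,\dots,\alpha_t \in \C$, residue-matching at every other $\theta$-place $\beta \in \overline{\C(x)}$ yields strong algebraic constraints; combined with the algebraic independence of dilogarithms over logarithmic extensions from Proposition \ref{dilogarithmicidentity}, these should force every $c_i = 0$ and then absorb the log-bilinear and log-linear parts of $u^*$ into $F$, producing the required antiderivative of $H(\log(x))$ in $F$.

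The main obstacle is this last step: tracking residues at non-constant algebraic places $\beta \in \overline{\C(x)}\setminus\C$ that arise when some $g_i$ genuinely mixes $x$ and $\theta$ (for example $g_i = \theta - h(x)$ with $h \in \C(x)$ non-constant), and verifying that the constraint $r_i' = c_i(1-g_i)'/(1-g_i) + \sum_j c_{ij} g_j'/g_j$ together with $r_i \in F$ indeed propagates the needed cancellations. A natural scheme is induction on the total $\theta$-degree of the multiset $\{g_i, 1-g_i\}_{i\in I}$, eliminating non-constant places one at a time.
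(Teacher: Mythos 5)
Your setup (contrapositive, invoke Theorem \ref{NSdilog}, integrate the $r_i'$ relation via Kolchin--Ostrowski, do partial fractions in $\theta=\log(x)$ over $\overline{\C(x)}$) matches the paper's frame, but the decisive step is exactly the part you leave open, and the route you sketch for it points in the wrong direction. You aim to show $u^*\in F+\C$ by forcing ``every $c_i=0$'' via algebraic independence of dilogarithms; that is not what is needed and not what can be expected: the relation $\sum_i r_i g_i'/g_i = H(\log x)-w'$ is an identity inside $\C(x,\log x)$ in which no dilogarithm appears, so the independence lemma of Section \ref{sectionidndilog} has no purchase on it, and the constraints one actually gets are of the form $c_ib_{ip}+\sum_j c_{ij}a_{ip}=0$ (the $c_i$ themselves need not vanish). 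What the argument really requires is a pole-order/residue mechanism resting on the single arithmetic fact that no $\beta\in\overline{\C(x)}$ satisfies $\beta'=1/x$. From this: (i) the derivative of any element of $\C(x)(\log x)$ has no \emph{simple} $\theta$-poles, so since the displayed expression for $r_i'$ exhibits only simple poles, each $r_i$ is pole-free, the simple-pole coefficients cancel, $r_i'\in\C(x)$, and Kolchin--Ostrowski gives $r_i=e_i\log(x)+d_i$ with $e_i,d_i$ constants; (ii) consequently $\sum_i r_ig_i'/g_i$ has at most simple poles, these must occur at constant places (because the poles of $H$ are constants and $w'$ has only poles of order $\ge 2$), and at such a place the residue identity reads $c-d/x=w_{p1}'$ with $w_{p1}\in\overline{\C(x)}$, which is impossible unless $d=0$ since $1/x$ has no antiderivative in $\overline{\C(x)}$. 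Hence $\sum_i r_ig_i'/g_i$ is a polynomial of degree at most one in $\log(x)$, and after normalizing $H$ to its polar part (using $R_n'=\log^n(x)$) and descending the polynomial part from $\overline{\C(x)}$-coefficients to $\C(x)$ (Proposition 2.1(a) of \cite{ykvrs-2019}), one gets an antiderivative of $H(\log x)$ of the form $w-Q(\log x)\in\C(x,\log x)$, the desired contradiction. None of this machinery is present in your sketch; the ``induction on total $\theta$-degree'' is not developed and does not substitute for the two uses of the nonintegrability of $1/x$.

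A secondary point: your reduction ``$(u-u^*)'=0$ hence $u-u^*\in\C$'' compares elements of two different extensions ($E$ and an extension of $\tilde F$) and would need a common no-new-constants overfield to make sense; in any case it is unnecessary, since the goal is only to produce \emph{some} antiderivative of $H(\log x)$ in $\C(x,\log x)$, not to show that your particular $u^*$ lies in $F+\C$.
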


\begin{proof}	
	Suppose on the contrary  that  $H(\log(x))$ has an antiderivative in a transcendental dilogarithmic-elementary extension of $\C(x)(\log(x)).$ Then from Theorem \ref{NSdilog}, we have
	\begin{align}\label{egmainexp}
	H(\log x)=\sum_{i\in I}r_i\frac{g'_i}{g_i}+w',
	\end{align}
	\begin{equation}\label{egexpr_i}
r'_i=c_i\frac{(1-g_i)'}{1-g_i}+\sum_{j\in I}c_{ij}\frac{g'_j}{g_j},
	\end{equation}
	where $r_i, g_i, w\in \C(x,\log(x))$ and $c_i$ and $c_{ij}$ are constants such that  $c_{ij}=c_{ji}$. 
Observe that $(x\log(x)-x)'=\log(x)$ and define $R_0=x$ and for $n\geq 1,$ $R_n:=x\log^n(x)-nR_{n-1}.$ Then  it can be easily verified that  $R'_n=(x\log^n(x)-R_{n-1})'=\log^n(x)$  for $n\geq 1$.  Therefore for any polynomial $P\in \C[Y],$ there is an element $q\in \C(x)[\log(x)]$ with $q'=P(\log(x)).$	Thus, if necessary, we shall suitably replace $w$  and assume that the partial fraction expansion of $H(\log(x))$ over $\C$ is of the form  \begin{align}\label{egexpofH}	H(\log(x))=\sum_{p=1}^l\sum_{q=1}^{m_p}\frac{f_{pq}}{(\log(x)-\a_p)^{q}}.
	\end{align} 
where $f_{pq},\a_p\in \C.$	 
	 
	 Let 
	\begin{align}\label{parfracw}
		w=\sum_{p=1}^s\sum_{q=1}^{n_p}\frac{w_{pq}}{(\log(x)-\b_p)^{q}}+P(\log(x)),
	\end{align}	
	where $P$ is a polynomial over $\overline{\C(x)}$, be the partial fraction expansion of $w$ over $\overline{\C(x)}.$ Note that $P(\log(x))'$ is again a polynomial in $\log(x)$
 over $\overline{\C(x)}.$ Assume for the moment that we have proved $\sum_{i\in I}r_i\frac{g'_i}{g_i}$ is a polynomial in $\log(x)$ over $\overline{\C(x)}.$ Since $$\left(\frac{w_{pq}}{(\log(x)-\b_p)^{q}}\right)'=\frac{w'_{pq}}{(\log(x)-\b_p)^{q}}-\frac{qw_{pq}((1/x)-\b'_p)}{(\log(x)-\b_p)^{q+1}},$$ it then follows from Equation \ref{egmainexp} that \begin{align*}P(\log(x))'&=-\sum_{i\in I}r_i\frac{g'_i}{g_i}\ \ \text{and that }\\ H(\log x)&= (w-P(\log(x)))'.\end{align*}
 Now $P(\log (x))\in \overline{\C(x)}[\log x]$ and $P(\log (x))'\in \C(x)[\log x].$ By Proposition 2.1 (a) of \cite{ykvrs-2019}, there is an element $Q(\log (x))\in \C(x)[\log x]$ such that $Q(\log(x))'=P(\log(x))'.$ Thus,
   $$H(\log x)= (w-Q(\log(x)))'$$ which is a contraction 
and this completes the proof of the theorem.

Now we shall in fact prove that $\sum_{i\in I}r_i\frac{g'_i}{g_i}=\eta \log(x)+\zeta$ for some $\eta, \zeta\in \overline{\C(x)}.$ Let  $g_i=\eta_i\prod_{p=1}^{n}(\log(x)-\b_p)^{a_{ip}}$ and $1-g_i=\xi_i\prod_{p=1}^{n}(\log(x)-\b_p)^{b_{ip}},$ where $\eta_i,\xi_i\in \mathbb{C}(x),$  $\b_p\in \overline{\mathbb{C}(x)}$ and $a_{ip}, b_{ip}$ are integers.  Then 
	
\begin{equation}\label{logderivgi}
\frac{g'_i}{g_i}=\frac{\eta'_i}{\eta_i}+\sum^n_{p=1}a_{ip}\frac{(\log(x)-\b_p)'}{\log(x)-\b_p}\qquad\text{and}\quad \frac{(1-g_i)'}{1-g_i}=\frac{\xi'_i}{\xi_i}+\sum^n_{p=1}b_{ip}\frac{(\log(x)-\b_p)'}{\log(x)-\b_p}
\end{equation}	and thus Equation \ref{egexpr_i} becomes
	\begin{equation}\label{expressionforr'_i}r'_{i}=c_i\frac{\xi_i'}{\xi_i}+\sum_{j\in I}c_{ij}\frac{\eta'_j}{\eta_j}+\sum_{p=1}^n\left(c_ib_{ip}+\sum_{j\in I}c_{ij}a_{ip}\right)\frac{(\log(x)-\b_p)'}{\log(x)-\b_p}.\end{equation}	

Let  $z\in \C(x)(\log(x))$  have a pole of order $m\geq 1$ at $\b\in \overline{\C(x)}$.  Using the partial fraction expansion of $z,$ we find a unique element $\tilde{z}\in \C(x)(\log(x))$ such that $\tilde{z}$ has no pole at $\beta$ and that\begin{equation}\label{shortpar-frac} z=\frac{f_m}{(\log(x)-\beta)^m}+ \cdots+\frac{f_0}{\log(x)-\beta}+\tilde{z},\quad{where }\ f_m\neq 0. \end{equation}
Since $\tilde{z}$ has no pole at $\beta,$ its derivative $\tilde{z}'$ cannot have a pole at $\beta$ either. Since there is no element in $\C(x)$ whose derivative is $1/x$,  we have $\beta'\neq 1/x$ and it follows that \begin{equation}\label{shortpar-fracderiv}z'=\frac{mf_m(\log(x)-\beta)'}{(\log(x)-\beta)^{m+1}}+ \ \text{terms involving lower powers of }\ \frac{1}{\log(x)-\beta}+\tilde{z}'\end{equation} has a pole at $\b$ of order $m+1\geq 2.$ Thus, for any $z\in \C(x)(\log(x)),$ $z'$ has no simple poles. 

Taking $z=r_i,$ we obtain from Equation \ref{expressionforr'_i} that both $r_i$ and $r'_i$ cannot have  poles. Thus for each $i\in I$, $c_ib_{ip}+\sum_{j\in I}c_{ij}a_{ip}=0$ and  \begin{equation} 
	r'_i=c_i\frac{\xi_i'}{\xi_i}+\sum_{j\in I}c_{ij}\frac{\eta'_j}{\eta_j}\in \C(x). \label{egexpr_ij2}
	\end{equation}

Note that $r_i\in \C(x)(\log x)$ and $r'_i\in \C(x).$	 Therefore, by  Kolchin-Ostrowski Theorem,  $r_i=e_i\log(x)+d_i$ for some constants $d_i$ and $e_i.$ Thus 
\begin{align}\sum_{i\in I}r_i\frac{g'_i}{g_i}&=\sum_{i\in I}(e_i\log(x)+d_i)\left(\frac{\eta'_i}{\eta_i}+\sum^n_{p=1}a_{ip}\frac{(\log(x)-\b_p)'}{\log(x)-\b_p}\right)\notag\\  &=\sum_{i\in I}(e_i\log(x)+d_i)\frac{\eta'_i}{\eta_i}+\sum_{i\in I}\sum^n_{p=1} \left(e_i \b_p +d_i\right)a_{ip}\frac{(\log(x)-\b_p)'}{\log(x)-\b_p}+\sum_{i\in I}\sum^n_{p=1}e_ia_{ip}(\log(x)-\b_p)'.\notag\\ &= \eta \log(x)+ \zeta+\sum_{i\in I}\sum^n_{p=1} \left(e_i \b_p +d_i\right)a_{ip}\frac{(\log(x)-\b_p)'}{\log(x)-\b_p},\label{Hlogxrexp}
	\end{align} 
	where $\eta=\sum_{i\in I}e_i\frac{\eta'_i}{\eta_i}\in \overline{\C(x)}$ and $\zeta=\sum_{i\in I} d_i\frac{\eta'_i}{\eta_i}+\sum_{i\in I}\sum^n_{p=1}e_ia_{ip}\left(\frac{1}{x}-\b'_p\right)\in \overline{\C(x)}$. We claim that for each $p,$ $\sum_{i\in I}(e_i \b_p +d_i)a_{ip}=0$ and this would prove that $\sum_{i\in I}r_i\frac{g'_i}{g_i}$ is a polynomial of degree at most one, as desired. 

Suppose that $\beta\in \overline{\C(x)}$ be a  pole of $w$ of order $m\geq 1.$ Then as noted earlier, $w'$ has a pole at $\beta$ of order $m+1\geq 2.$ From Equation \ref{Hlogxrexp}, we observe that $\sum_{i\in I}r_i\frac{g'_i}{g_i}$ can have only simple poles. Now since $H(\log(x)=\sum_{i\in I}r_i\frac{g'_i}{g_i}+w'$ and that the poles of $H(\log(x))\in \C(\log(x))$ are constants,  we obtain that $\beta$ is must also be a constant. That is $\b\in \C.$ Now the poles of $H(\log(x))$ and $w'$ are constants and therefore  the  poles  of $\sum_{i\in I}r_i\frac{g'_i}{g_i}=H(\log(x))-w'$ are constants as well. 

If $\beta_p\in \C$ is a pole of $\sum_{i\in I}r_i\frac{g'_i}{g_i}$ then \begin{equation*}c:=Res_{\beta_p}(H(\log(x)))=Res_{\beta_p}(\sum_{i\in I}r_i\frac{g'_i}{g_i})+ Res_{\beta_p}(w')=\sum_{i\in I}a_{ip}(e_i\beta_p+d_i)(\log(x)-\beta_p)'+w'_{p1},\end{equation*}
where $c\in \C$ and $w_{p1}\in \C(x)$ is the residue of $w$ at $\beta_p.$ Since $\b_p$ has to be a simple pole of $\sum_{i\in I}r_i\frac{g'_i}{g_i},$ we have $d:=\sum_{i\in I}a_{ip}(e_i\beta_p+d_i)$ to be a nonzero constant. Thus we obtain $$c-\frac{d}{x}=w'_{p1} \quad \text{for }\  w_{p1}\in \overline{\C(x)},$$ which contradicts the fact that $1/x$ has no antiderivative in $\overline{\C(x)}$. Therefore $\sum_{i\in I}r_i\frac{g'_i}{g_i}$ has no poles, that is, $\sum_{i\in I}(e_i \b_p +d_i)a_{ip}=0$ for each $p.$ \end{proof}

\br It can be  shown (either through a hand computation or applying Risch algorithm) that $\C(x)(\log(x))$ does not contain any antiderivative of $1/\log(x)$. Thus, taking $H(Y)=1/Y$, we shall apply Theorem \ref{logarithmicintegrals} and prove that the logarithmic integral $\int 1/\log(x) dx$ does not belong  to any transcendental dilogarithmic-elementary extension of $\C(x,\log(x)).$  
\er


\section{Logarithmic and Dilogarithmic Identities}\label{sectionidndilog}



The goal of this section is to establish certain logarithmic and dilogarithmic identities. Let $F\subsetneqq F(\t)$ be differential fields, where $\t$ is transcendental over $F$, $f\in F(\t)$ be a non-zero element and  $\oF$ be an algebraic closure of $F$.  Choose
monic coprime polynomials $P,Q\in F[\t]$ and elements $\eta,\xi\in F$ so that $$f=\eta\frac{P}{Q}\qquad\text{and}\quad 1-f=\frac{Q-\eta P}{Q}.$$ Let $R$ be a monic polynomial such that $\xi R=Q-\eta P$ and observe that $R$ is coprime to both $P$ and $Q$. Let $\a_1,\a_2,\dots,\a_t \in \oF$ be distinct elements such that $$P=\prod^m_{j=1}(\t-\a_j)^{a_j},\quad \frac{1}{Q}=\prod^n_{j=m+1}(\t-\a_j)^{a_j}\quad \text{and}\quad R=\prod^t_{j=n+1}(\t-\a_j)^{b_j},$$
where $a_1,\cdots, a_m$ are positive integers, $a_{m+1},\cdots, a_n$ are negative integers and $b_{n+1},\cdots, b_t$ are positive integers. Then $$f=\eta\prod^t_{i=1}(\t-\a_j)^{a_j}\qquad \text{and}\quad 1-f=\xi\prod^t_{j=1}(\t-\a_j)^{b_j},$$ where $a_{n+1}=\dots=a_t=0$,  $b_1=\dots=b_m=0$ and $b_j=a_j$ for $j=m+1,\dots,n.$  
Let $E$ be any differential field extension of $\overline{F}$, with $C_E=C_F$, containing $\ell_2(\eta), \log \eta, \log \xi$ and $\log(\a_i-\a_j)$ for all $i\neq j$.

\begin{proposition} \label{somelogarithmicidentitites}For any $v_1,\dots, v_t\in E$, the following identities hold: \begin{enumerate}[(i)]\item $\sum_{\substack{j,k=1\\k\neq j}}^t(a_kb_j-a_jb_k)\frac{\alpha_j'-\alpha_k'}{\alpha_j-\alpha_k}v_k =\sum_{k=1}^t\left(b_k\frac{\eta'}{\eta}-a_k\frac{\xi'}{\xi}\right)v_k.$  \label{logetaxi1}\\  
\item $\sum_{\substack{j,k=1\\k\neq j}}^t(a_kb_j-a_jb_k)\log(\a_j-\a_k)v_k =\sum_{k=1}^t\left(b_k\log \eta-a_k\log\xi+c_k\right)v_k,$ where each $c_k$ is a constant.  \label{logetaxi} 
 \end{enumerate}
\end{proposition}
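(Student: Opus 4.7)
The plan is to reduce both identities to a single multiplicative identity in $\overline{F}$, and then prove that identity by evaluating the relation $f+(1-f)=1$ at the $\alpha_k$. Since $v_1,\ldots,v_t$ are free, both (\ref{logetaxi1}) and (\ref{logetaxi}) reduce, for each fixed $k$, to the scalar identities
\[ \sum_{j\neq k}(a_kb_j-a_jb_k)\,\frac{\alpha_j'-\alpha_k'}{\alpha_j-\alpha_k} \;=\; b_k\frac{\eta'}{\eta}-a_k\frac{\xi'}{\xi} \]
and
\[ \sum_{j\neq k}(a_kb_j-a_jb_k)\,\log(\alpha_j-\alpha_k) \;=\; b_k\log\eta - a_k\log\xi + c_k, \]
respectively. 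The second is an antiderivative of the first, and any two antiderivatives in $E$ differ by a constant in $C_E=C_F$; this ambiguity is precisely the $c_k$ allowed in (\ref{logetaxi}). Thus it suffices to establish the multiplicative relation
\[ \prod_{j\neq k}(\alpha_j-\alpha_k)^{a_kb_j-a_jb_k} \;=\; \epsilon_k\,\eta^{b_k}\,\xi^{-a_k} \qquad \text{for some } \epsilon_k\in C_F^{\times}, \]
and then take a logarithmic derivative to obtain (\ref{logetaxi1}) and a logarithm to obtain (\ref{logetaxi}), finally multiplying by $v_k$ and summing over $k$.

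Next I would verify this multiplicative relation by case analysis on where $k$ lies. For $k\in\{1,\ldots,m\}$ (so $a_k>0$ and $b_k=0$) we have $f(\alpha_k)=0$, hence $(1-f)(\alpha_k)=1$; evaluating $1-f=\xi\prod_j(\theta-\alpha_j)^{b_j}$ at $\theta=\alpha_k$ gives $\xi\prod_{j\neq k}(\alpha_k-\alpha_j)^{b_j}=1$, and raising to the $-a_k$ power while converting each $\alpha_k-\alpha_j$ to $-(\alpha_j-\alpha_k)$ yields the claim (using $a_kb_j-a_jb_k=a_kb_j$). The case $k\in\{n+1,\ldots,t\}$ is symmetric, using $f(\alpha_k)=1$. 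For $k\in\{m+1,\ldots,n\}$ (so $a_k=b_k<0$), both $f$ and $1-f$ have a common pole of order $-a_k$ at $\alpha_k$, so the relation $f+(1-f)=1$ forces the leading terms at this pole to cancel; writing $f=(\theta-\alpha_k)^{a_k}\phi(\theta)$ and $1-f=(\theta-\alpha_k)^{b_k}\psi(\theta)$ with $\phi,\psi$ regular and nonvanishing at $\alpha_k$, this cancellation gives $\phi(\alpha_k)+\psi(\alpha_k)=0$, that is,
\[ \eta\prod_{j\neq k}(\alpha_k-\alpha_j)^{a_j} \;+\; \xi\prod_{j\neq k}(\alpha_k-\alpha_j)^{b_j} \;=\; 0; \]
solving for $\eta/\xi$ and raising to the $(-a_k)$-th power produces the desired multiplicative identity, in view of $a_kb_j-a_jb_k=-a_k(b_j-a_j)$.

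I expect the main obstacle to be the third case, with its common pole and the careful extraction of leading coefficients, together with the bookkeeping of signs coming from $\alpha_k-\alpha_j=-(\alpha_j-\alpha_k)$ and from the various integer exponents. Fortunately, these sign discrepancies only contribute to the multiplicative constant $\epsilon_k$; they disappear upon taking the logarithmic derivative in (\ref{logetaxi1}), and they are harmlessly absorbed into the additive constants $c_k$ in (\ref{logetaxi}).
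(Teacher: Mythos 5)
Your proof is correct and is essentially the paper's own argument in multiplicative dress: the paper likewise fixes $k$, splits into the three cases $P(\alpha_k)=0$, $Q(\alpha_k)=0$, $R(\alpha_k)=0$, evaluates the identity $\eta P+\xi R=Q$ at $\theta=\alpha_k$ (your $f+(1-f)=1$ with leading coefficients extracted at the pole), and deduces (ii) from (i) by comparing antiderivatives in $E$ using $C_E=C_F$. The only slip is in your middle case: since $b_k=a_k$ there, $a_kb_j-a_jb_k=a_k(b_j-a_j)$ rather than $-a_k(b_j-a_j)$, so you should raise $\eta/\xi=-\prod_{j\neq k}(\alpha_k-\alpha_j)^{b_j-a_j}$ to the power $a_k$ (equivalently, raise to $-a_k$ and invert); this is harmless and the rest of the argument goes through unchanged.
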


\begin{proof} Consider the expression $$T=\sum_{\substack{j,k=1\\k\neq j}}^t(a_kb_j-a_jb_k)\frac{\alpha_j'-\alpha_k'}{\alpha_j-\alpha_k}v_k.$$ Since $b_1=\dots=b_m=0,$ $a_{n+1}=\dots=a_t=0$ and $a_j=b_j$ for $j=m+1,\dots,n,$ we observe that  $T=T_1+T_2+T_3,$ where
\begin{align*}
&T_1=\sum_{k=1}^ma_k\left(\sum_{j=m+1}^tb_j\frac{\alpha_j'-\alpha_k'}{\alpha_j-\alpha_k}\right)v_k\\&T_2=-\sum_{k=n+1}^tb_k\left(\sum_{j=1}^na_j\frac{\alpha_j'-\alpha_k'}{\alpha_j-\alpha_k}\right)v_k\\&T_3=\sum_{k=m+1}^n\left(a_k\sum_{j=m+1}^tb_j\frac{\alpha_j'-\alpha_k'}{\alpha_j-\alpha_k}-b_k\sum_{j=1}^na_j\frac{\alpha_j'-\alpha_k'}{\alpha_j-\alpha_k}\right)v_k.
\end{align*}
Since $\eta P+\xi R=Q$, if $P(\alpha_k)=0$ for some $k=1,\ldots,m$ then $\xi R(\alpha_k)=Q(\alpha_k)$ and therefore $\dfrac{R(\alpha_k)'}{R(\alpha_k)}-\dfrac{Q(\alpha_k)'}{Q(\alpha_k)}=-\dfrac{\xi'}{\xi}.$  Note that $$\frac{R(\alpha_k)'}{R(\alpha_k)}=\sum^t_{j=n+1}b_j\frac{\alpha'_k-\alpha'_j}{\alpha_k-\alpha_j}\qquad\text{and}\quad \frac{Q(\alpha_k)'}{Q(\alpha_k)}=-\sum^n_{j=m+1}b_j\frac{\alpha'_k-\alpha'_j}{\alpha_k-\alpha_j}.$$
Thus,
\begin{align}\label{part1}\sum_{j=m+1}^tb_j\frac{\alpha_j'-\alpha_k'}{\alpha_j-\alpha_k}=\frac{R(\alpha_k)'}{R(\alpha_k)}-\frac{Q(\alpha_k)'}{Q(\alpha_k)}=-\frac{\xi'}{\xi}.\end{align}
This implies
\begin{align}\label{part1'}T_1=-\sum_{k=1}^ma_k\frac{\xi'}{\xi}v_k.\end{align}
Similarly one shows for $k=n+1,\ldots,t$ that  
\begin{align}\label{part2}\sum_{j=1}^na_j\frac{\alpha_j'-\alpha_k'}{\alpha_j-\alpha_k}=\frac{P(\alpha_k)'}{P(\alpha_k)}-\frac{Q(\alpha_k)'}{Q(\alpha_k)}=-\frac{\eta'}{\eta}\end{align}
and
\begin{align}\label{part2'}
T_2=\sum_{k=n+1}^tb_k\frac{\eta'}{\eta}v_k.
	\end{align}
Since for $k=m+1,\dots,n~ $ we have $a_k=b_k$, it follows that $$a_k\sum_{j=m+1}^nb_j\frac{\a'_j-\a'_k}{\a_j-\a_k}-b_k\sum_{j=m+1}^na_j\frac{\a'_j-\a'_k}{\a_j-\a_k}=0.$$ Since $Q(\alpha_k)=0$ for $k=m+1,\dots,n~ $  we have $\eta P(\alpha_k)=-\xi R(\alpha_k)$. Thus \begin{align}\label{part3}\sum_{j=m+1}^tb_j\frac{\alpha_j'-\alpha_k'}{\alpha_j-\alpha_k}-\sum_{j=1}^na_j\frac{\alpha_j'-\alpha_k'}{\alpha_j-\alpha_k}&=\sum_{j=n+1}^tb_j\frac{\alpha_j'-\alpha_k'}{\alpha_j-\alpha_k}-\sum_{j=1}^ma_j\frac{\alpha_j'-\alpha_k'}{\alpha_j-\alpha_k}\notag\\&=-\frac{P(\alpha_k)'}{P(\alpha_k)}+\frac{R(\alpha_k)'}{R(\alpha_k)}\notag\\&=\frac{\eta'}{\eta}-\frac{\xi'}{\xi}.\end{align}This implies
\begin{align}\label{part3'}T_3=\sum_{k=m+1}^n\left(b_k\frac{\eta'}{\eta}-a_k\frac{\xi'}{\xi}\right)v_k.\end{align}
Add Equations \ref{part1'}, \ref{part2'} and \ref{part3'} to obtain the identity (\ref{logetaxi1}). Take the antiderivative of Equations \ref{part1}, \ref{part2} and \ref{part3}, multiply by $v_k$ and then sum over all $j$ and $k$ with $j\neq k$  to obtain  identity (\ref{logetaxi}).\end{proof}




\begin{proposition}\label{dilogarithmicidentity} If deg$(P)\leq$ deg$(Q)$ then 
\begin{enumerate}[(i)]
	\item $S_1:=\sum_{k=1}^t(a_k\ell_2(\eta)+a_k\log\eta\log\xi-\frac{1}{2}b_k\log^2\eta)$ is a constant in $F.$\label{easythings}
	\item $S_2:=\sum_{k=1}^t(a_k\log\xi-b_k\log\eta)$ is a constant in $F.$\label{easythings1}
	 \item \label{newdilogidentity} For some constants\footnote{In \cite{Badd1}, p.923, Baddoura established a similar identity for dilogarithmic integrals in terms of their Bloch-Wigner-Spence function and our proof of identity (\ref{l_2fprop}) uses similar techniques.}  $c,d_k$ and $e$, \begin{align*}\ell_2(f(\theta))={}& \ell_2(\eta)-\sum_{\substack{j,k=1\\k\neq j}}^t a_jb_k\ell_2\left( \frac{\theta-\alpha_j}{\theta-\alpha_k}\right) - \frac{1}{2}\sum_{\substack{j,k=1}}^ta_jb_k\log^2(\theta-\alpha_k)\\&-\sum_{k=1}^{t}b_k\log(\theta-\alpha_k)\log\eta-e\log\eta+\sum_{k=1}^td_k\log(\t-\a_k)+c.\end{align*}\label{l_2fprop}
\item For any $j,k$, there are constants $c_{jk}$ and $d_{jk}$ such that\begin{align*}
	\ell_2\left(\frac{\t-\a_j}{\t-\a_k}\right)=&-\ell_2\left(\frac{\t-\a_k}{\t-\a_j}\right)+\log(\t-\a_j)\log(\t-\a_k)\\
	&-\frac{1}{2}\left(\log^2(\t-\a_j)+\log^2(\t-\a_k)\right)+d_{jk}\log\left(\frac{\t-\a_j}{\t-\a_k}\right)+c_{jk}.
	\end{align*} \label{inverserelation}
\end{enumerate} 

\end{proposition}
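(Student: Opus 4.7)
The plan is to prove each part by differentiation: two elements of $E$ whose derivatives coincide differ by an additive constant, and this is exactly what each statement encodes.

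For parts (\ref{easythings}) and (\ref{easythings1}), writing $A := \sum_k a_k$ and $B := \sum_k b_k$ reduces $S_2$ to $A\log\xi - B\log\eta$ and $S_1$ to $A(\ell_2(\eta)+\log\eta\log\xi) - \tfrac{B}{2}\log^2\eta$. The hypothesis $\deg P \le \deg Q$, combined with a leading-coefficient comparison in $\xi R = Q - \eta P$, yields the key structural relations (for instance, $\xi = 1$ with $\deg R = \deg Q$ when $\deg P < \deg Q$, and $\xi = 1-\eta$ with $\deg R = \deg Q$ when $\deg P = \deg Q$ and $\eta \ne 1$). Substituting these into $S_1'$ and $S_2'$, together with $\ell_2(\eta)' = (\eta'/\eta)\theta_\eta$ where $\theta_\eta' = \eta'/(1-\eta)$ and the logarithmic derivatives of $\eta,\xi$, collapses both derivatives to zero.

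For part (\ref{newdilogidentity}), I would first compute
\[
\ell_2(f(\theta))' = -\Big(\frac{\eta'}{\eta} + \sum_j a_j \frac{(\theta-\alpha_j)'}{\theta-\alpha_j}\Big)\Big(\log\xi + \sum_k b_k \log(\theta-\alpha_k)\Big)
\]
directly from $f = \eta\prod_j(\theta-\alpha_j)^{a_j}$ and $1-f = \xi\prod_j(\theta-\alpha_j)^{b_j}$, and then differentiate the proposed right-hand side term by term. For each $\ell_2((\theta-\alpha_j)/(\theta-\alpha_k))$, the identity $1 - (\theta-\alpha_j)/(\theta-\alpha_k) = (\alpha_j-\alpha_k)/(\theta-\alpha_k)$ introduces terms containing $\log(\alpha_j-\alpha_k)$. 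The decisive manoeuvre is to set $v_k := (\theta-\alpha_k)'/(\theta-\alpha_k)$ and apply Proposition~\ref{somelogarithmicidentitites}(\ref{logetaxi}): after a $j \leftrightarrow k$ swap in the double sum, the antisymmetric combination $\sum_{j\ne k}(a_kb_j - a_jb_k)\log(\alpha_j-\alpha_k)\,v_k$ appears and is converted to $\sum_k(b_k\log\eta - a_k\log\xi + c_k)v_k$. After this exchange every $\log(\alpha_j-\alpha_k)$ disappears; the $\log^2(\theta-\alpha_k)$ contributions come from the diagonal $j=k$ pieces of the double sum, and the surviving $\log\eta$ and $\log(\theta-\alpha_k)$ pieces are matched by the constants $-e$ and $d_k$, with the leftover genuine constants packaged into $c$.

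For part (\ref{inverserelation}), I would first prove the inversion identity $\ell_2(g)+\ell_2(1/g) = -\tfrac{1}{2}\log^2 g + d\log g + c$ by differentiation: $(1/g)'/(1/g) = -g'/g$ and $1 - 1/g = -(1-g)/g$ give $\ell_2(1/g)' = (g'/g)(\log(1-g) + \log(-1) - \log g)$, which combined with $\ell_2(g)' = -(g'/g)\log(1-g)$ yields $(\ell_2(g)+\ell_2(1/g))' = (g'/g)(\log(-1) - \log g) = -\tfrac{1}{2}(\log^2 g)' + \log(-1)(\log g)'$. Setting $g = (\theta-\alpha_j)/(\theta-\alpha_k)$ and expanding $\log^2 g = \log^2(\theta-\alpha_j) - 2\log(\theta-\alpha_j)\log(\theta-\alpha_k) + \log^2(\theta-\alpha_k)$ reproduces the claimed form exactly. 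The main obstacle is part (\ref{newdilogidentity}): the number of cross-terms is quadratic in $t$, and the essential insight is that the antisymmetric coefficient pattern $a_kb_j - a_jb_k$ produced by the dilogarithm derivatives is precisely what Proposition~\ref{somelogarithmicidentitites}(\ref{logetaxi}) is designed to eliminate. Part (\ref{inverserelation}) is likely to be used implicitly in (\ref{newdilogidentity}) to symmetrize the double sum in $j, k$ before invoking Proposition~\ref{somelogarithmicidentitites}.
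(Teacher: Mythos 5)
Your route for parts (\ref{easythings1}), (\ref{l_2fprop}) and (\ref{inverserelation}) is essentially the paper's: differentiate, expand $\log(1-f)$ through the partial fractions, and use Proposition~\ref{somelogarithmicidentitites}(\ref{logetaxi}) with $v_k=(\theta-\alpha_k)'/(\theta-\alpha_k)$ to remove the $\log(\alpha_j-\alpha_k)$ terms; your inversion identity $\ell_2(g)+\ell_2(1/g)=-\tfrac{1}{2}\log^2 g+d\log g+c$ is exactly the computation the paper performs for (\ref{inverserelation}), packaged slightly more generally. (A small inaccuracy: the $\log^2(\theta-\alpha_k)$ terms in (\ref{l_2fprop}) are not just the diagonal $j=k$ pieces; the full sum $\sum_{j,k}a_jb_k\log^2(\theta-\alpha_k)$ arises from the product $\bigl(\sum_j a_j\tfrac{(\theta-\alpha_j)'}{\theta-\alpha_j}\bigr)\bigl(\sum_k b_k\log(\theta-\alpha_k)\bigr)$ together with the off-diagonal pieces returned by the dilogarithm sum.)

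The genuine gap is in part (\ref{easythings}) and in the origin of the $\ell_2(\eta)$ term of (\ref{l_2fprop}). When $\deg P<\deg Q$ your structural relations give $\xi=1$, $B=\sum_k b_k=0$ and $A=\sum_k a_k=\deg P-\deg Q\neq 0$, so $S_2'=A\,\xi'/\xi=0$ is fine, but $S_1=A\bigl(\ell_2(\eta)+\log\eta\log\xi\bigr)$ and $S_1'=A\,\tfrac{\eta'}{\eta}\bigl(\theta_\eta+\log\xi\bigr)$, where $\theta_\eta'=-(1-\eta)'/(1-\eta)$ and $\log\xi$ is a constant. Nothing in the leading-coefficient relations links $1-\eta$ to $\xi=1$, so this does not ``collapse to zero'' as claimed; this is precisely the point where the paper inserts the separate assertion $\ell_2(\eta)=-\log\xi\log\eta+c$ for that case, which your sketch neither reproves nor flags. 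The same issue propagates to (\ref{l_2fprop}): differentiating your right-hand side produces $\ell_2(\eta)'$, which equals $-\tfrac{\eta'}{\eta}\log(1-\eta)$ up to a constant multiple of $\eta'/\eta$, and matching it against the term $-\tfrac{\eta'}{\eta}\log\xi$ coming from the left-hand side requires $\xi=1-\eta$; that identification holds only when $\deg P=\deg Q$ and $\eta\neq 1$. The remaining cases ($\deg P<\deg Q$, or $\eta=1$) need the three-way case analysis the paper carries out at the end of its proof, where $-\tfrac{\eta'}{\eta}\log\xi$ integrates instead to a constant multiple of $\log\eta$ and one must still account for the $\ell_2(\eta)$ on the right. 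So your term-by-term differentiation is sound in the main case $\xi=1-\eta$, but as written it silently assumes that case throughout; to close the proof you must add the explicit case distinction and, in the strict-inequality case, justify (or otherwise absorb) the identity for $\ell_2(\eta)$ on which both (\ref{easythings}) and the stated form of (\ref{l_2fprop}) depend.
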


\begin{proof}
Note that  if deg$(P)<$ deg$(Q)$ then $\sum_{k=1}^tb_k=0$ and $\log\xi$ is a constant. Also $\ell_2(\eta)=-\log\xi\log\eta+c$ for a constant $c.$ Therefore, $S_1=\sum_{k=1}^ta_kc$ and $S_2=\sum_{k=1}^ta_k\log\xi$ are both constants. On the other hand, if deg$(P)=$ deg$(Q)$ then $\sum_{k=1}^ta_k=0$ and either $\sum_{k=1}^tb_k=0$ or $\eta=1.$ Thus either $S_1=S_2=0$ or $S_1=-(1/2)\sum_{k=1}^tb_k\log^2\eta$ and $S_2=-\sum_{k=1}^tb_k\log\eta$ are both constants. This proves (\ref{easythings})
 and (\ref{easythings1}).

Since 
	\begin{equation*}
	\ell'_2(f(\theta))=-\frac{f(\t)'}{f(\t)}\log (1-f(\t)),
	\end{equation*}
	we shall replace $f(\t)$ and $1-f(\t)$ with their partial fraction expansions, then $\log(1-f(\t))=\log\xi+\sum_{k=1}^tb_k\log(\t-\a_k)+e$ for some constant $e,$ and rearrange the terms to obtain
	\begin{align} \label{l_2f}
	\ell_2'(f(\t))=&-\frac{\eta'}{\eta}\log\xi-\sum_{k=1}^t(a_k\log\xi-b_k\log\eta)\frac{\t'-\a'_k}{\t-\a_k}-\sum_{j,k=1}^ta_jb_k\frac{\theta'-\alpha_j'}{\theta-\alpha_j}\log(\t-\alpha_k)\notag\\&-\bigg( \sum_{k=1}^tb_k\log(\t-\alpha_k)\log\eta\bigg)'-e\bigg(\frac{\eta'}{\eta}+\sum_{k=1}^ta_k\frac{\t'-\a'_k}{\t-\a_k}\bigg).
	\end{align}
	From the definition of dilogarithmic integral, observe
	\begin{align}\label{l_2frac}
	\sum_{\substack{j,k=1\\ k\neq j}}^ta_jb_k\ell_2'\left( \frac{\t-\alpha_j}{\t-\alpha_k}\right)&=-\sum_{\substack{j,k=1\\ k\neq j}}a_jb_k\bigg(\frac{\t'-\a'_j}{\t-\a_j}-\frac{\t'-\a'_k}{\t-\a_k}\bigg)\log\Big(\frac{\a_j-\a_k}{\t-\a_k}\Big)\notag\\&=-\sum_{\substack{j,k=1\\ k\neq j}}(a_kb_j-a_jb_k)\log(\a_j-\a_k)\frac{\t'-\a'_k}{\t-\a_k}-\sum_{\substack{j,k=1\\ k\neq j}}a_kb_jc_{jk}\frac{\t'-\a'_k}{\t-\a_k}\notag\\&~~+\sum_{\substack{j,k=1\\ k\neq j}}a_jb_k\bigg(\frac{\t'-\a'_j}{\t-\a_j}-\frac{\t'-\a'_k}{\t-\a_k}\bigg)(\log(\t-\a_k)+c_{jkk}),
		\end{align}
	where constants $c_{jk}=\log(\a_k-\a_j)-\log(\a_j-\a_k)$ and $c_{jkk}=\log(\frac{\a_j-\a_k}{\t-\a_k})-\log(\a_j-\a_k)+\log(\t-\a_k).$
	Using Proposition \ref{somelogarithmicidentitites} (\ref{logetaxi}),  Equation \ref{l_2frac} can be written as
	\begin{align*}
	\sum_{\substack{j,k=1\\ k\neq j}}^ta_jb_k\ell_2'\left( \frac{\t-\alpha_j}{\t-\alpha_k}\right)&=-\sum_{k=1}^t(b_k\log\eta-a_k\log\xi+c_k)\frac{\t'-\a'_k}{\t-\a_k}-\sum_{\substack{j,k=1\\ k\neq j}}a_kb_jc_{jk}\frac{\t'-\a'_k}{\t-\a_k}\notag\\&~~+\sum_{\substack{j,k=1\\ k\neq j}}a_jb_k\bigg(\frac{\t'-\a'_j}{\t-\a_j}-\frac{\t'-\a'_k}{\t-\a_k}\bigg)(\log(\t-\a_k)+c_{jkk}),
	\end{align*}
	where $c_k$ are constants. For constant $e_k:=-c_k+\sum_{j=1,j\neq k}^t(-a_kb_jc_{jk}-a_jb_kc_{jkk}+a_kb_jc_{kjj}),$ we have
	\begin{align*}
	\sum_{\substack{j,k=1\\ k\neq j}}^ta_jb_k\ell_2'\left( \frac{\t-\alpha_j}{\t-\alpha_k}\right)=&\sum_{k=1}^t(a_k\log\xi-b_k\log\eta)\frac{\t'-\a'_k}{\t-\a_k}+\sum_{\substack{j,k=1\\ k\neq j}}a_jb_k\bigg(\frac{\t'-\a'_j}{\t-\a_j}-\frac{\t'-\a'_k}{\t-\a_k}\bigg)\log(\t-\a_k)\notag\\&
	+\sum_{k=1}^te_k\frac{\t'-\a'_k}{\t-\a_k}.
	\end{align*}
	Using the above equation, we shall rewrite Equation \ref{l_2f} as
	\begin{align}\label{dilogidentity}\ell_2'(f(\theta))=&-\frac{\eta'}{\eta}\log\xi-\sum_{\substack{j,k=1\\k\neq j}}^t a_jb_k\ell_2'\left( \frac{\theta-\alpha_j}{\theta-\alpha_k}\right) -\sum_{\substack{j,k=1}}^ta_jb_k\frac{\t-\alpha_k'}{\t-\alpha_k}\log(\theta-\alpha_k)\notag\\&-\left(\sum_{k=1}^{t}b_k\log(\theta-\alpha_k)\log\eta\right)'-e\frac{\eta'}{\eta}+\sum_{k=1}^td_k\frac{\t'-\a'_k}{\t-\a_k},\end{align}
	where $d_k=e_k-ea_k.$ Observe that if deg$(P)<$ deg$(Q)$ then $\log \xi$ is a constant and $(\eta'/\eta)\log \xi=(\log\xi\log\eta)',$ if deg$(P)=$ deg$(Q)$ and $\eta=1$ then  $\frac{\eta'}{\eta}\log\xi=0$ and finally, if $\eta\neq1$ then $\xi=1-\eta$ and $\frac{\eta'}{\eta}\log\xi=-\ell_2'(\eta).$ Thus in any event, by taking the antiderivative,  Equation \ref{dilogidentity} yields the identity (\ref{l_2fprop}).


Note  that
	\begin{align*}\ell_2'\left(\frac{\t-\a_j}{\t-\a_k}\right)&=-\left(\frac{\t'-\a_j'}{\t-\a_j}-\frac{\t'-\a_k'}{\t-\a_k}\right)\log\left(\frac{\a_j-\a_k}{\t-\a_k}\right)\\&
	=-\left(\frac{\t'-\a_j'}{\t-\a_j}-\frac{\t'-\a_k'}{\t-\a_k}\right)(\log(\a_j-\a_k)-\log(\t-\a_k)+\delta_{k})\end{align*}
	and
	\begin{align*}
	\ell_2'\left(\frac{\t-\a_k}{\t-\a_j}\right)&=-\left(\frac{\t'-\a_k'}{\t-\a_k}-\frac{\t'-\a_j'}{\t-\a_j}\right)\log\left(\frac{\a_k-\a_j}{\t-\a_j}\right)\\
	&=-\left(\frac{\t'-\a_k'}{\t-\a_k}-\frac{\t'-\a_j'}{\t-\a_j}\right)(\log(\a_j-\a_k)-\log(\t-\a_j)+\delta_j),
	\end{align*}
	where $\delta_j$ and $\delta_k$ are constants. Adding the above two equation gives
	\begin{align*}
	\ell_2'\left(\frac{\t-\a_j}{\t-\a_k}\right)+\ell_2'\left(\frac{\t-\a_k}{\t-\a_j}\right)=&\left(\log(\t-\a_j)\log(\t-\a_k)\right)'-\frac{\t'-\a_j'}{\t-\a_j}\log(\t-\a_j)\\&-\frac{\t'-\a_k'}{\t-\a_k}\log(\t-\a_k)+d_{jk}\left(\frac{\t'-\a_j'}{\t-\a_j}-\frac{\t'-\a_k'}{\t-\a_k}\right),
	\end{align*}
	where $d_{jk}=\delta_j-\delta_k.$
	Rearrange the above terms and take the antiderivative to prove the identity (\ref{inverserelation}).\end{proof}

\subsection{Algebraic independence of certain dilogarithmic integrals.}
From the identity (\ref{inverserelation}), it is clear that $\ell_2(\frac{\t-\a_j}{\t-\a_k})$ and $\ell_2(\frac{\t-\a_k}{\t-\a_j})$ are algebraically dependent over any differential field containing $\log(\t-\a_j)$ and $\log(\t-\a_k)$.  However, in the next lemma, we shall show that the set $\{\ell_2\left(\frac{\t-\a_j}{\t-\a_k}\right);k>j,\a_j\neq\a_k \ \ \text{for}\ \ j\neq k \}$ is algebraically independent over logarithmic extensions of $F(\t).$

\bl
Let $F(\t)\supset F$ be differential fields, $\t$ be transcendental over $F$, $C_{F(\t)}=C_F$ and assume that either $\t'\in F$ or $\t'/\t\in F.$ Let $\a_1,\dots,\a_t,~t\geq3$ be distinct elements in $F.$ Then the set $\{\ell_2\left(\frac{\t-\a_j}{\t-\a_k}\right);k>j\}$ is algebraically independent over the logarithmic extension  $E=F(\t)(\{\log(\a_j-\a_k),\log(\t-\a_j);j,k=1,\dots,t\}).$  
\el
\bpf
Suppose the set $\{\ell_2\left(\frac{\t-\a_j}{\t-\a_k}\right);k>j\}$ is algebraically dependent over $E.$ Since for each $k>j,$ the derivative $\ell_2'\left(\frac{\t-\a_j}{\t-\a_k}\right)\in E,$ we shall apply Kolchin-Ostrowski Theorem and obtain 
\[\ell_2\left(\frac{\t-\a_1}{\t-\a_2}\right)=\sum_{\substack{j,k=1\\k>j,k\neq 2}}^tc_{jk}\ell_2\left(\frac{\t-\a_j}{\t-\a_k}\right)+v,\]
where each $c_{jk}$ is a constant and $v\in E.$
Taking the derivatives, we obtain 
\begin{align}\label{algindeqn}
-\left(\frac{\t'-\a'_1}{\t-\a_1}-\frac{\t'-\a'_2}{\t-\a_2}\right)\log\left(\frac{\a_1-\a_2}{\t-\a_2}\right)=-\sum_{\substack{j,k=1\\k>j,k\neq 2}}^tc_{jk}\left(\frac{\t'-\a'_j}{\t-\a_j}-\frac{\t'-\a'_k}{\t-\a_k}\right)\log\left(\frac{\a_j-\a_k}{\t-\a_k}\right)+v'.
\end{align}
Let $F^*$ be a subfield of $E$ such that $\t$ is transcendental over $F^*$ and $F^*(\t)=F(\t)(\{\log(\a_j-\a_k)\}.$ Since $\t'$ or $\t'/\t\in F,$ the elements $\log(\t-\a_1),\dots,\log(\t-\a_n)$ are algebraically independent over $F^*(\t),$ except  when $\t'/\t\in F$ and $\a_1=0.$ 
Since $\log(\t-\a_2)$ is transcendental over the field $F_2:=F^*(\t)(\{\log(\t-\a_j);j\neq 2\})$, $v$ must be a polynomial in $\log(\t-\a_2)$ of degree at most $2$ and thus we shall write $v=c_1\log^2(\t-\a_2)+v_1\log(\t-\a_2)+v_0,$ where $c_1\in C_F$ and $v_1,v_0\in F_2.$ Compare the coefficients of $\log(\t-\a_2)$ in the above equation and obtain 
\[\frac{\t'-\a'_1}{\t-\a_1}-\frac{\t'-\a'_2}{\t-\a_2}=2c_1\frac{\t'-\a'_2}{\t-\a_2}+v'_1.\]
It is obvious that $c_1=-1/2$ and for a constant $c,$ $v_1=\log(\t-\a_1)+c.$ Thus the Equation \ref{algindeqn} becomes
\begin{align}\label{afteralpha2}&-\left(\frac{\t'-\a'_1}{\t-\a_1}-\frac{\t'-\a'_2}{\t-\a_2}\right)\log\left(\a_1-\a_2\right)=\notag\\&-\sum_{\substack{j,k=1\\k>j,k\neq 2}}^tc_{jk}\left(\frac{\t'-\a'_j}{\t-\a_j}-\frac{\t'-\a'_k}{\t-\a_k}\right)\log\left(\frac{\a_j-\a_k}{\t-\a_k}\right)+\frac{\t'-\a'_2}{\t-\a_2}(\log(\t-\a_1)+c)+v'_0.\end{align}

Since $\log(\t-\a_1)$ is transcendental over $F_1=F^*(\t)(\{\log(\t-\a_j);j\neq 1,2\}),$ except when $\t'/\t\in F$ and $\a_1=0,$  we observe that  if $\a_1\neq 0$ then $v_0$ must be a polynomial in $\log(\t-\a_1)$ of degree at most $2$. Thus we shall write $v_0=c_2\log^2(\t-\a_1)+w_1\log(\t-\a_1)+w_0,$ for $c_2\in C_F$ and $w_1,w_0\in F_1.$  Comparing the coefficients of $\log(\t-\a_1)$ in the above equation, we shall obtain 
\[\frac{\t'-\a'_2}{\t-\a_2}+2c_2\frac{\t'-\a'_1}{\t-\a_1}+w'_1=0.\]
But, irrespective of whether $\t'$ or $\t'/\t\in F$, $w_1$ has no poles. Thus we have arrived at a contradiction. Hence, we shall assume $\a_1=0.$

Let $\t'/\t=x',$ where $x\in F$ and $\a_1=0$.  Then the Equation \ref{afteralpha2} becomes
\begin{align}\label{aftertheta}&\bigg(\frac{\t'-\a'_2}{\t-\a_2}-x'\bigg)\log\left(-\a_2\right)=-\sum_{\substack{j,k=1\\k>j,k\neq 2}}^tc_{jk}\bigg(\frac{\t'-\a'_j}{\t-\a_j}-\frac{\t'-\a'_k}{\t-\a_k}\bigg)\log\bigg(\frac{\a_j-\a_k}{\t-\a_k}\bigg)+\frac{\t'-\a'_2}{\t-\a_2}(x+c)+v'_0.\end{align}
Proceeding as earlier, since $\log(\t-\a_3)$ is transcendental over the differential field $F_3:=F^*(\t)(\{\log(\t-\a_j);j\neq 1,2,3\}),$ we conclude that $v_0$ is a polynomial in $\log(\t-\a_3)$ of the form $v_0=c_3\log^2(\t-\a_3)+s_1\log(\t-\a_3)+s_0,$ for $c_3\in C_F$ and $s_1,s_0\in F_3.$ Substituting $v_0$ and comparing the coefficients of $\log(\t-\a_3) $ in Equation \ref{aftertheta}, we obtain
\begin{align}
	c_{13}\left(x'-\frac{\t'-\a'_3}{\t-\a_3}\right)+c_{23}\left(\frac{\t'-\a'_2}{\t-\a_2}-\frac{\t'-\a'_3}{\t-\a_3}\right)+2c_3\frac{\t'-\a'_3}{\t-\a_3}+s'_1=0.
\end{align}
Comparing the poles, we get $c_{13}=2c_3,$ $c_{23}=0$ and $s_1=-c_{13}x+d,$ where $d\in C_F.$ Thus, the Equation \ref{aftertheta} reduces to
\begin{align}\bigg(\frac{\t'-\a'_2}{\t-\a_2}-x'\bigg)\log\left(-\a_2\right)=&-\sum_{\substack{j,k=1\\k>j,k\neq 2,3}}^tc_{jk}\left(\frac{\t'-\a'_j}{\t-\a_j}-\frac{\t'-\a'_k}{\t-\a_k}\right)\log\left(\frac{\a_j-\a_k}{\t-\a_k}\right)+\frac{\t'-\a'_2}{\t-\a_2}(x+c)\notag\\&+c_{13}\left(x'-\frac{\t'-\a'_3}{\t-\a_3}\right)\log(-\a_3)+(c_{13}x+d)\frac{\t'-\a'_3}{\t-\a_3}+s_0.\end{align}Repeating this procedure and comparing  the coefficients of $\log(\t-\a_k)$ for $ k>3,$ we obtain $c_{jk}=0$ for all $2<j<k\leq t$ and
\begin{align}\label{eqnthet3}\bigg(\frac{\t'-\a'_2}{\t-\a_2}-x'\bigg)\log\left(-\a_2\right)=&-\sum_{k>2}^tc_{1k}\left(x'-\frac{\t'-\a'_k}{\t-\a_k}\right)\log(-\a_k)+\frac{\t'-\a'_2}{\t-\a_2}(x+c)\notag\\&+\sum_{k>2}^t(c_{1k}x+d_{k})\frac{\t'-\a'_k}{\t-\a_k}+t_0,\end{align}
where $d_{k}\in C_F$ and element $t_0\in F.$ Now since $(\t'-\a'_2)/(\t-\a_2)=x'+(x'\a_2-\a'_2)/(\t-\a_2),$ we shall compare the poles of $\t-\a_2$ in Equation \ref{eqnthet3} and obtain that $\log(-\a_2)=x+c$.  That is, $(\a_2)'/(\a_2)=x'=(\t')/(\t)$ and $(\t/\a_2)'=0.$ This contradicts our assumption that  $C_{F(\t)}=C_F.$\epf




	\bibliography{<your-bib-database>}

\begin{thebibliography}{00}

	\bibitem{Badd1}\label{Badd1} J. Baddoura, {\em Integration in finite terms with elementary functions and dilogarithms}, J. Symbolic Comput., Vol 41, No. 8 (2006), pp.909--942.
	
\bibitem{Badd2}\label{Badd2}J. Baddoura, {\em A note on symbolic integration with polylogarithms}, Mediterr. J. Math,  Vol. 8,    no. 2 (2011), pp. 229--241.

\bibitem{Heb}\label{Heb} W. Hebisch, {\em Integration in terms of polylogarithm}, {\color{magenta}https://arxiv.org/abs/1810.05865}; arXiv:1810.05865

\bibitem{ykvrs-2019}\label{ykvrs-2019} Y. Kaur, V. Srinivasan, {\em Integration in Finite Terms with Dilogarithmic Integrals, Logarithmic Integrals and Error Functions}, Journal of Symbolic Computation Vol. 94 (2019), pp. 210-233.

\bibitem{Kol1968}E.R. Kolchin, \textsl{Algebraic Groups and Algebraic Dependence}, Amer. J. of Math, Vol. 90, No.4. (1968), pp.1151-1164.

\bibitem{Ritt} J. F. Ritt, {\em Integration in Finite Terms}, Columbia University Press, New York (1948).

\bibitem{Ros1968} M. Rosenlicht, {\em Liouville's theorem on functions with elementary integrals},	Pacific J. Math, Vol. 24,  (1968), pp. 153--161.	

\bibitem{Rue-Sin}\label{Ruebel-Singer}L. Rubel, M. Singer, {\em Autonomous Functions}, Jour. Diff eqns,  Vol. 75, (1988), pp. 354-370.

\bibitem{sin-saund-cav} M. Singer,  B. Saunders,  B. Caviness,  {\em An extension of Liouville's theorem on integration in finite terms}, SIAM J. Comput., Vol. 14, no. 4 (1985), 966--990.	




\end{thebibliography}

\end{document}